\def\@cite#1#2{{\rm [}{{\rm#1}\if@tempswa , #2\fi}{\rm ]}}
\newcommand{\eq}[1]{\begin{equation}\label{#1}}
\newcommand{\en}{\end{equation}}
\def\IC{\mathbb{C}}
\def\inv{^{-1}}
\title{Generalized preconditioned locally harmonic residual method for non-Hermitian eigenproblems
\thanks{%
Version generated on \today. This material is based upon work supported by the 
U.S. Department of Energy, Office of Science, 
Advanced Scientific Computing Research and Basic Energy Sciences, SciDAC program
(the first and second authors) 
and National Science Foundation under Grant DMS-1419100 (the third author). 
}
}
\author{Eugene Vecharynski$^{\dagger}$ \and 
Chao Yang\thanks{Computational Research Division, Lawrence Berkeley National Laboratory,
One Cyclotron Road, MS 50F-1620L, Berkeley, CA 94720, USA ({\tt \{evecharynski,cyang\}@lbl.gov})} 
\and 
Fei Xue\thanks{Department of Mathematics, University of Louisiana at Lafayette, P.O. Box 41010, Lafayette, LA 70504-1010, USA ({\tt fxue@louisiana.edu})}}
\begin{document}
\maketitle



\setcounter{page}{1}

\begin{abstract}
We introduce the Generalized Preconditioned Locally Harmonic Residual (GPLHR) method for solving standard and generalized non-Hermitian eigenproblems. The 
method is particularly useful for computing  a subset of eigenvalues, and their eigen- or Schur vectors, closest to a given shift. The proposed method is based on block iterations and can take advantage of a preconditioner if it is 
available. It does not need to perform exact shift-and-invert transformation. 
Standard and generalized eigenproblems are handled in a unified framework.
Our numerical experiments demonstrate that GPLHR is generally more robust and efficient than existing methods, especially if the available memory is limited.  
\end{abstract}

\begin{keywords}
Eigenvalue, eigenvector, non-Hermitian, preconditioned eigensolver
\end{keywords}


%

\pagestyle{myheadings}
\thispagestyle{plain}
\markboth{EUGENE VECHARYNSKI, CHAO YANG AND FEI XUE}{THE GPLHR METHOD FOR NON-HERMITIAN EIGENPROBLEMS} 

\section{Introduction}\label{sec:intro}
Large non-Hermitian eigenproblems arise in a variety of important applications, 
including resonant state calculation~\cite{Balslev.Combes:71, Ho:83, Reinhardt:82} or excited state analysis for    
equation-of-motion coupled-cluster (EOM-CC) approaches~\cite{HeadGordon.Lee:97, el-str-book-Helgaker.Jorgensen.Olsen:2000} in 
quantum chemistry, linear stability analysis of the Navier-Stokes equation in fluid dynamics \cite{Cliffe.Spence.Tavener:08, eswbook:14}, 
crystal growth simulation~\cite{Langer:80, Langer.Muller:78}, magnetohydrodynamics~\cite{Kerner:86},
power systems design~\cite{Kamwa.Lefebvre.Loud:02}, and many others; see, e.g., \cite{Trefethen.Embree.05} for more examples. 

In their most general form, these problems can be written as
\eq{eq:gevp}
A x = \lambda B x,
\en
where $A$ and $B$ are general square matrices.
We are particularly interested in the case in which 
both $A$ and $B$ are very large
and sparse, or available only implicitly through a matrix-vector 
multiplication procedure.
If $B$ is the identity matrix, then~\eqref{eq:gevp} becomes a standard eigenproblem. 

The spectrum of~\eqref{eq:gevp}, denoted by $\Lambda(A,B)$, is given by a set of numbers $\lambda \in \mathbb{C}$ that make $A-\lambda B$ singular. 
The value of $\lambda$ can be infinity in the case of a singular $B$. 
Given a scalar $\sigma \in \IC$, which we refer to as a \textit{shift}, 
we seek to find a subset of eigenvalues $\lambda \in \Lambda(A,B)$ that 
are closest to $\sigma$, and their associated (right) eigenvectors~$x$.
These eigenvalues can be either extreme eigenvalues (e.g., eigenvalues
with the largest magnitude) or interior eigenvalues that are inside
the convex hull of the spectrum. 



Our focus in this paper is on algorithms for computing interior eigenvalues
and their corresponding eigenvectors of a non-Hermitian pencil.
It is well known that these eigenpairs are often difficult to compute
in practice.  Traditional methods, such as the inverse subspace iteration 
or variants of the shift-invert Arnoldi algorithm, see, 
e.g.,~\cite{templates:00, Saad-book3}, 
rely on using spectral transformations that require performing
LU factorizations of $A - \sigma B$ and computing solutions to triangular
linear systems.  Such an approach can be prohibitively expensive,
especially when the problem size is large and the LU factors of 
$A - \sigma B$ are not sparse. 

There has been some effort in recent years to develop methods that 
are factorization free.  Examples of such methods 
include the inexact inverse subspace iteration~\cite{Robbe.Sadkane.Spence:09, Xue.Elman:11} and inexact shift-invert Arnoldi methods \cite{Freitag.Spence.09, Xue.Elman:12} in which linear systems of the form $(A - \sigma B)w = r$ are
solved iteratively.  While these schemes can be considerably less expensive 
per iteration, the overall convergence of these methods is less 
predictable. There is often a tradeoff between the accuracy of 
the solution to linear~systems and the convergence rate of the 
subspace or Arnoldi iterations.  Setting~an appropriate convergence
criterion for an iterative solution of $(A - \sigma B)w = r$ is 
not straightforward. 

Another class of factorization-free methods include
the Generalized Davidson (GD) method~\cite{Morgan:92, Morgan.Scott:86} 
and the Jacobi--Davidson (JD) methods~\cite{Fokkema:Sleijpen.Vorst:98}. 
The GD methods generally do not rely on solution of linear systems. 
The JD style QR and QZ algorithms (JDQR and JDQZ) presented 
in~\cite{Fokkema:Sleijpen.Vorst:98} can be viewed as 
\textit{preconditioned eigensolvers} in which a Newton-like 
correction equation is solved approximately by a preconditioned
iterative solver.  

The GD method can be easily extended into a block method in which
several eigenpairs can be approximated simultaneously~\cite{Sadkane:93, Zuev.Ve.Yang.Orms.Krylov:15}.  Block GD is widely used in quantum chemistry.
A block GD method is particularly well suited 
for modern high performance computers with a large number of processing 
units. This is because in a block method several (sparse) matrix vector 
multiplications, which often constitute the major cost of the algorithm, 
can be carried out
simultaneously.  Furthermore, one can easily implement data blocking, 
exploit data reuse and take advantage of BLAS3 in a block method.  These techniques can lead to significant speedups on modern high performance 
computers as reported in~\cite{Aktulga.Buluc.Williams.Yang:14}. 
However, the convergence of the block GD method depends to a large 
degree on the effectiveness of the preconditioner. In quantum 
chemistry applications, the matrix $A$ is often diagonal dominant.
The diagonal part of the matrix can be used to construct 
an effective preconditioner. In other applications, it is less clear
how to construct a good preconditioner when $A$ is not diagonal 
dominant~\cite{Ve.Kn:14TR}.

The JDQR and JDQZ methods tend to be more robust with respect to 
the choice of preconditioners. However, extending JDQR and JDQZ to 
block methods is not straightforward.
Because JDQR and JDQZ methods compute one eigenpair at a time, 
concurrency can only be exploited within a single matrix-vector multiplication
procedure, which limits its parallel scalability to a large number of 
processors.


In this paper, we present a new family of methods for solving non-Hermitian 
eigenproblems, called the Generalized 
Preconditioned Locally Harmonic Residual (GPLHR) methods. The proposed 
scheme is a block method that constructs approximations to a partial 
(generalized) Schur form of the matrix pair $(A,B)$ associated 
with~\eqref{eq:gevp} iteratively.
It does not require performing an LU factorization of $A - \sigma B$. 
It allows a preconditioner to be used to construct a search 
space from which approximations to the Schur vectors can be extracted. 
We demonstrate that the GPLHR methods are generally 
more robust and efficient than JDQR/JDQZ as well as the block GD method 
when the same preconditioner is used, and when dimension of the
search spaces are comparable. 
In addition, because GPLHR is a block method, it is
well suited for high performance computers that consist of many cores or processors.
              
The GPLHR algorithm is a generalization of the recently introduced 
Preconditioned Locally Harmonic Residual (PLHR) method for computing interior eigenpairs of Hermitian eigenproblems~\cite{Ve.Kn:14TR} to the non-Hermitian case 
(hence the name). The GPLHR scheme can also be viewed as an extension of 
the well known LOBPCG~\cite{Knyazev:01} method and block inverse-free 
preconditioned (BIFP) Krylov 
subspace methods~\cite{Golub.Ye:02, Quillen.Ye:10} for computing extreme eigenpairs of Hermitian problems. 
At each step, all these methods construct 
trial subspaces of a fixed dimension and use them to extract the desired approximations by means of a subspace projection. 
The key difference 
is that GPLHR systematically uses a \textit{harmonic Rayleigh--Ritz} procedure~\cite{Morgan:91, Morgan.Zeng:98}, which allows for a robust eigen- or Schur pair extraction independent of the location
of the targeted eigenvalues.   Moreover, the proposed method explicitly utilizes properly chosen projectors to stabilize convergence, and is capable of performing iterations using the approximations to Schur vectors instead of potentially ill-conditioned
eigenvectors.

This paper is organized as follows. In Section~\ref{sec:eig}, we review standard eigenvalue-revealing decompositions for non-Hermitian
eigenproblems and propose a new generalized Schur form which emphasizes only the right Schur vectors. This form is then used to derive 
the GPLHR algorithms for standard and generalized eigenproblems in Section~\ref{sec:pqr}.
The preconditioning options are discussed in Section~\ref{sec:prec}.      
Section~\ref{sec:num} presents numerical results. Our conclusions can be found in Section~\ref{sec:con}. Appendix A briefly describes the approximate eigenbasis based GPLHR 
iteration (GPLHR-EIG).

\section{Eigenvalue-revealing decompositions}\label{sec:eig}
For non-Hermitian eigenproblems, it is common (see, e.g.,~\cite{Moler.Stewart:73, Saad-book3, Stewart.Sun:90}) to 
replace~\eqref{eq:gevp} by an equivalent~problem
\eq{eq:gevp_mod}
\beta A x = \alpha B x,
\en
where the pair $(\alpha, \beta)$, called a \textit{generalized eigenvalue}, defines an eigenvalue $\lambda = \alpha / \beta$ of the matrix pair $(A,B)$. 
This formulation is advantageous from the numerical perspective. For example, 
it allows revealing 
the infinite or indeterminate eigenvalues of~\eqref{eq:gevp}. 
The former corresponds to the case where $B$ is singular and 
$\beta = 0$, $\alpha \neq 0$, 
whereas the latter is indicated by $\alpha = \beta = 0$ 
and takes place if both $A$ and $B$ are singular with a non-trivial intersection of their null spaces. 
Additionally, 
the separate treatment of 
$\alpha$ and $\beta$ allows one to handle situations where either of the quantities is close to zero,
which leads to underflow or overflow for $\lambda = \alpha / \beta$.
Note that a generalized eigenvalue $(\alpha, \beta)$ is invariant with respect to multiplication by a scalar, i.e., for any nonzero 
$t \in \mathbb{C}$,  $(t\alpha, t\beta)$ is also a generalized eigenvalue. In order to fix a representative pair, a normalizing condition should
be imposed, e.g.,
$|\alpha|^2 + |\beta|^2 = 1$.
 
  
In this paper, we assume that the pair $(A,B)$ is \textit{regular}, i.e., $\mbox{det}(\beta A + \alpha B)$ is not identically zero for all $\alpha$ and 
$\beta$. 
The violation of this assumption gives a \textit{singular} pair, which corresponds to an ill-posed problem.  
In this case, a regularization
procedure should be applied to obtain a nearby regular pair, e.g.,~\cite[Chapter 8.7]{templates:00}, for which the eigenproblem is solved.
%
In most cases, however, $(A,B)$ is regular, even when $A$ or $B$ or both are singular as long as the intersection of their null spaces is zero.
Thus, regular pairs can have infinite eigenvalues, whereas the possibility of indeterminate eigenvalues is excluded.

\subsection{Partial eigenvalue decomposition and generalized Schur form}\label{subsec:eig}
Let us assume that the targeted eigenvalues of~\eqref{eq:gevp} are non-deficient, i.e., their algebraic and geometric multiplicities coincide. 
Then, given formulation~\eqref{eq:gevp_mod}, the \textit{partial eigenvalue decomposition} of $(A,B)$ can be written in the form
\eq{eq:partial}
A X \Lambda_B = BX \Lambda_A,
\en
where $X \in \IC^{n \times k}$ is a matrix whose columns $x_j$ correspond to the $k$ eigenvectors associated with the 
wanted eigenvalues $\lambda_j$. The $k$-by-$k$ matrices $\Lambda_A$ and $\Lambda_B$ are diagonal, with the diagonal entries 
given by $\alpha_j$ and $\beta_j$, respectively. In our context, it is assumed that $\lambda_j = \alpha_j / \beta_j$ are the eigenvalues 
closest to $\sigma$.

Since, by our assumption, $\lambda_j$'s are non-deficient, there exists $k$  associated linearly independent eigenvectors, so that $X$ is full-rank and~\eqref{eq:partial}
is well defined. 
For a vast majority of applications, the partial eigenvalue decomposition~\eqref{eq:partial} does exist, i.e., the desired eigenvalues are
non-deficient. Nevertheless, pursuing decomposition~\eqref{eq:partial} is generally not recommended, as it aims at computing a
basis of eigenvectors which can potentially be ill-conditioned. 

In order to circumvent the difficulties related to ill-conditioning or deficiency of the eigenbasis,
we consider instead a \textit{partial generalized Schur decomposition}
\eq{eq:gschur}
\left\{
\begin{array}{ccc}
A V & = & Q R_A, \\
B V & = & Q R_B; 
\end{array} 
\right.
\en
see, e.g.,~\cite{Fokkema:Sleijpen.Vorst:98, Saad-book3, Stewart:71}.
%
Here, $V$ and $Q$ are the $n$-by-$k$ matrices of the \textit{right and left generalized Schur vectors}, respectively. 
The factors $R_A$ and $R_B$ are $k$-by-$k$ upper triangular
matrices, such that the ratios $\lambda_j = R_A(j,j)/R_B(j,j)$ of their diagonal entries correspond to the desired eigenvalues.

The advantage of the partial Schur form~\eqref{eq:gschur} is that it is defined for any 
$(A,B)$ and both $V$ and $Q$ are orthonormal,
so that they 
can be computed in a numerically stable manner.
The matrix $V$ then gives an orthonormal basis of the 
invariant subspace associated with the eigenvalues of interest. 
If individual eigenvectors are needed, the right generalized Schur vectors can be transformed into 
eigenvector block $X$ 
by the means of the standard Rayleigh--Ritz procedure for the pair $(A,B)$, performed over the subspace spanned by the columns of $V$.  
Similarly,~$Q$ contains an orthonormal basis of the left invariant subspace and can be used to retrieve the left eigenvectors. 


\subsection{The ``$Q$-free'' partial generalized Schur form}\label{subsec:gschur_new}

We start by addressing the question of whether it is possible to eliminate $Q$ from
decomposition~\eqref{eq:gschur} and obtain 
an eigenvalue-revealing Schur type form that contains only the right Schur vectors $V$
and triangular factors. 
%
Such a form would provide a numerically stable alternative to the eigenvalue decomposition~\eqref{eq:partial}. 
It would also give an opportunity to handle the standard and generalized 
eigenproblems in a uniform manner.
Most importantly, 
it would allow us to define a numerical scheme that 
emphasizes computation only of
the generalized right Schur vectors, which are often the ones needed in a real application.
The left Schur 
basis $Q$ can be obtained as a by-product of~computation.

For example, if $R_B$ is nonsingular, then $Q$ can be eliminated in a straightforward manner by multiplying the bottom equation in~\eqref{eq:gschur} by $R_B^{-1}$ and substituting
the resulting expression for $Q$ into the top equation. This yields the desirable decomposition $AV = BV M_A$, where $M_A = R_B^{-1} R_A$ is triangular with eigenvalues on the diagonal.
Similarly, assuming that $R_A$ is nonsingular, one can obtain $AV M_B = BV$, where $M_B = R_A^{-1} R_B$ is also triangular, with inverted eigenvalues on the diagonal.

However, the above treatment involves inversions of $R_A$ or $R_B$, and consequently could cause numerical instabilities whenever the triangular matrices to be inverted have very small diagonal entries. Moreover, in the presence of both zero and infinite eigenvalues, 
$R_A$ and $R_B$ are singular and cannot be inverted at all. Therefore, it would be ideal to construct a $Q$-free Schur form for any regular $(A,B)$ that does not rely on the inverse of the triangular matrices $R_A$ and $R_B$.

In the rest of the section, we show that such a ``$Q$-free'' partial Schur form is possible. 
We start with the following lemma.
\begin{lemma}\label{lem:triu}
Let $R_A$ and $R_B$ be upper triangular matrices, such that $|R_A(j,j)| + |R_B(j,j)| \neq 0$ for all $1\leq j\leq k$. Then for any upper triangular $G_1$ 
and $G_2$, and any~$\tau_j \in \mathbb{C}$, such that 
\eq{eq:G1}
G_1(j,j) = 
\left\{
\begin{array}{cl}
 \displaystyle  \tau_j, & |R_A(j,j)| < |R_B(j,j)| \\
 \displaystyle \frac{1- \tau_j R_B(j,j)}{R_A(j,j)}, & \mbox{otherwise} 
\end{array} 
\right.
\en
and 
\eq{eq:G2}
G_2(j,j) = 
\left\{
\begin{array}{cl}
 \displaystyle  \frac{1 - \tau_j R_A(j,j)}{R_B(j,j)}, & |R_A(j,j)| < |R_B(j,j)| \\
 \displaystyle \tau_j, &  \mbox{otherwise} 
\end{array} ;
\right.
\en
the matrix $G = R_A G_1 + R_B G_2$ is upper triangular with $G(j,j) = 1$ for all $1 \leq j\leq k$.
\end{lemma}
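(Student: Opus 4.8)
The plan is to argue directly by computing the diagonal entries of $G = R_A G_1 + R_B G_2$, since upper triangularity of $G$ is immediate: $R_A$, $R_B$, $G_1$, $G_2$ are all upper triangular, and the set of upper triangular matrices is closed under addition and multiplication. So the only real content is verifying $G(j,j) = 1$ for each $j$. Because the product of upper triangular matrices has diagonal equal to the entrywise product of the diagonals, I have $G(j,j) = R_A(j,j)\, G_1(j,j) + R_B(j,j)\, G_2(j,j)$, and it suffices to check this equals $1$ in each of the two cases defining $G_1(j,j)$ and $G_2(j,j)$.

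First I would fix $j$ and write $a = R_A(j,j)$, $b = R_B(j,j)$, $\tau = \tau_j$, noting the hypothesis $|a| + |b| \neq 0$ guarantees that whichever of $a$, $b$ appears in a denominator in \eqref{eq:G1}--\eqref{eq:G2} is nonzero. In the case $|a| < |b|$: here $b \neq 0$, $G_1(j,j) = \tau$ and $G_2(j,j) = (1 - \tau a)/b$, so $a\tau + b \cdot (1-\tau a)/b = a\tau + 1 - \tau a = 1$. In the complementary case $|a| \geq |b|$: here $a \neq 0$ (since $|a| \geq |b|$ and $|a|+|b| \neq 0$ force $|a| > 0$), $G_1(j,j) = (1 - \tau b)/a$ and $G_2(j,j) = \tau$, so $a \cdot (1 - \tau b)/a + b\tau = 1 - \tau b + b\tau = 1$. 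Either way $G(j,j) = 1$, which completes the argument.

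There is essentially no obstacle here — the lemma is a bookkeeping statement designed so that the two branches of the definition each telescope to $1$, with the case split on $|a|$ versus $|b|$ present precisely to keep the denominators away from zero (which matters later when $R_A$ or $R_B$ is singular, i.e. has zero diagonal entries). The one point worth stating carefully is why the denominator in the active branch is nonzero, which is exactly the role of the hypothesis $|R_A(j,j)| + |R_B(j,j)| \neq 0$; I would make that explicit rather than leave it implicit. No structural induction or deeper machinery is needed.
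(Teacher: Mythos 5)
Your proof is correct and follows essentially the same route as the paper: observe that $G$ inherits upper triangularity, reduce to the diagonal identity $R_A(j,j)G_1(j,j)+R_B(j,j)G_2(j,j)=1$, and verify it in the two cases (the paper leaves this last verification as "easy to check," which you have simply written out, including the explicit remark on why the active denominator is nonzero). No further comment is needed.
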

\begin{proof}
The matrix $G$ is a combination of upper triangular matrices and, hence, is upper triangular.  
For each diagonal entry of $G$, we have $G(j,j) = R_A(j,j) G_1(j,j) + R_B(j,j) G_2(j,j)$, where $R_A(j,j)$ and $R_B(j,j)$ do not
equal zero simultaneously. 
It is then easy to check that for any $\tau_j \in \mathbb{C}$, \eqref{eq:G1} and~\eqref{eq:G2} satisfy $R_A(j,j) G_1(j,j) + R_B(j,j) G_2(j,j) = 1$ for all $1\leq j\leq k$. 
\end{proof}

The following theorem gives the existence of a ``$Q$-free'' generalized Schur form. 
\begin{theorem}\label{thm:gschur2}
For any regular pair $(A,B)$ there exists a matrix $V \in \mathbb{C}^{n \times k}$ with orthonormal columns and upper triangular matrices 
$M_A, M_B \in \mathbb{C}^{k \times k}$, such that
\eq{eq:gschur2}
AV M_B = BV M_A,
\en
and $\lambda_j = M_A(j,j)/M_B(j,j)$ are (possibly infinite) eigenvalues of $(A,B)$.
\end{theorem}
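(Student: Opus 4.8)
The plan is to obtain the desired $V$, $M_A$, $M_B$ by post-processing the classical partial generalized Schur decomposition~\eqref{eq:gschur}, which exists for every regular pair $(A,B)$ (see the references cited in Section~\ref{subsec:eig}). So I start from $AV = Q R_A$, $BV = Q R_B$ with $V$, $Q$ having orthonormal columns, $R_A$, $R_B$ upper triangular, and $\lambda_j = R_A(j,j)/R_B(j,j)$; the matrix $V$ here will be the one the theorem asks for. Because $(A,B)$ is regular, $R_A(j,j)$ and $R_B(j,j)$ never vanish simultaneously, i.e.\ $|R_A(j,j)|+|R_B(j,j)|\neq 0$ for all $j$, so the hypothesis of Lemma~\ref{lem:triu} is met. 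What remains is to eliminate $Q$ and manufacture upper triangular $M_A$, $M_B$ with the right diagonal ratios \emph{without} inverting $R_A$ or $R_B$.

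The key device is Lemma~\ref{lem:triu}: choose upper triangular $G_1$, $G_2$ so that $G := R_A G_1 + R_B G_2$ is upper triangular with unit diagonal, hence invertible (the parameters $\tau_j$ stay free for the moment). Multiplying the first equation of~\eqref{eq:gschur} on the right by $G_1$, the second by $G_2$, and adding gives $A V G_1 + B V G_2 = Q G$, so $Q = (A V G_1 + B V G_2)G^{-1}$. Substituting this back into $AV = Q R_A$ and rearranging yields
\[
A V\left(I - G_1 G^{-1} R_A\right) = B V\left(G_2 G^{-1} R_A\right);
\]
thus~\eqref{eq:gschur2} holds with $M_B := I - G_1 G^{-1} R_A$ and $M_A := G_2 G^{-1} R_A$, both upper triangular since $G^{-1}$ is unit upper triangular and the class of upper triangular matrices is closed under products and sums.

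The last step is the diagonal bookkeeping, which I expect to be the only delicate point. Since $G^{-1}$ has unit diagonal, $M_A(j,j) = G_2(j,j) R_A(j,j)$ and $M_B(j,j) = 1 - G_1(j,j) R_A(j,j)$, and the identity $R_A(j,j) G_1(j,j) + R_B(j,j) G_2(j,j) = 1$ from Lemma~\ref{lem:triu} gives $R_A(j,j) M_B(j,j) = R_B(j,j) M_A(j,j)$ for every $j$. Hence $M_A(j,j)/M_B(j,j) = R_A(j,j)/R_B(j,j) = \lambda_j$ as soon as the denominators are nonzero. What must still be ensured — and this is where the freedom in the $\tau_j$ is used — is that $M_A(j,j)$ and $M_B(j,j)$ are never simultaneously zero and that their ratio is the prescribed eigenvalue in the degenerate cases as well: a zero eigenvalue ($R_A(j,j)=0$, $R_B(j,j)\neq 0$) and, more delicately, an infinite one ($R_B(j,j)=0$, $R_A(j,j)\neq 0$), where one needs $M_B(j,j)=0$ but $M_A(j,j)\neq 0$. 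Taking $\tau_j=0$ in the branch $|R_A(j,j)|<|R_B(j,j)|$ and $\tau_j=1$ in the other branch is a uniformly valid choice: it makes $(M_A(j,j),M_B(j,j))$ equal to $(\lambda_j,1)$ in the first case and to $(R_A(j,j),R_B(j,j))$ in the second, so the diagonal of $(M_A,M_B)$ reveals $\lambda_j$ correctly in all cases — as $\infty$ precisely when $R_B(j,j)=0$. Everything else (the elimination of $Q$, the triangularity claims, orthonormality of $V$) is immediate once Lemma~\ref{lem:triu} is available.
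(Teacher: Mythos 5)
Your proposal is correct and follows essentially the same route as the paper's own proof: starting from the partial generalized Schur form~\eqref{eq:gschur}, invoking Lemma~\ref{lem:triu} to build $G = R_A G_1 + R_B G_2$ with unit diagonal, eliminating $Q$, and setting $M_A = G_2 G^{-1} R_A$, $M_B = I - G_1 G^{-1} R_A$ exactly as in~\eqref{eq:M}. The only (harmless) difference is that you fix $\tau_j \in \{0,1\}$ up front — which is precisely the practical choice~\eqref{eq:G1_fix}--\eqref{eq:G2_fix} — whereas the paper keeps $\tau_j$ generic subject to the nondegeneracy conditions; your handling of the zero and infinite eigenvalue cases is correct.
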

\begin{proof}
We start from the generalized Schur decomposition~\eqref{eq:gschur}. Let $G_1$ and $G_2$ be upper triangular with diagonal elements 
defined by~\eqref{eq:G1} and~\eqref{eq:G2}. Right-multiplying both sides of the top and bottom equalities in~\eqref{eq:gschur} by $G_1$ and $G_2$, respectively,
and then summing up the results, gives an equivalent system
\eq{eq:gschur_tmp}
\left\{
\begin{array}{rcl}
A V & = & Q R_A, \\
AV G_1 + BV G_2 & = & Q G, 
\end{array} 
\right.
\en
where $$G = R_A G_1 + R_B G_2.$$ Since $(A,B)$ is a regular pair, $R_A(j,j)$ and $R_B(j,j)$ do not equal zero simultaneously for the same $j$. Thus, by Lemma~\ref{lem:triu},
$G$ is upper triangular with $G(j,j) = 1$ for all $j$. In particular, the latter implies that the inverse of $G$ exists. Hence, we can eliminate $Q$ from~\eqref{eq:gschur_tmp}.

It follows from the bottom equality in~\eqref{eq:gschur_tmp} that $Q = (AV G_1 + BV G_2)G^{-1}$. 
Substituting it to the top identity gives $AV = (AV G_1 + BV G_2)G^{-1} R_A$,
which implies~\eqref{eq:gschur2}~with 
\eq{eq:M}
M_A = G_2 G^{-1}R_A, \quad M_B = I - G_1 G^{-1} R_A . 
\en 
Furthermore, since the diagonal entries of $G$ are all ones, $G^{-1}(j,j) = 1$
for all $j$. Thus, from~\eqref{eq:M}, the diagonal elements of $M_A$ and $M_B$ 
are related to those of $R_A$ and $R_B$ by $M_A(j,j) = G_2(j,j) R_A(j,j)$ and 
$M_B(j,j) = 1 - G_1(j,j) R_A(j,j)$. 
In particular,~using definition~\eqref{eq:G1} and~\eqref{eq:G2} of $G_1(j,j)$ and $G_2(j,j)$,
if $|R_A(j,j)| < |R_B(j,j)|$, we obtain
\eq{eq:Mj1}
M_A(j,j) = \frac{1-\tau_j R_A(j,j)}{R_B(j,j)} R_A(j,j), \quad M_B(j,j) = 1 - \tau_j R_A(j,j),
\en
which implies that $M_A(j,j)/M_B(j,j) =  R_A(j,j)/R_B(j,j) \equiv \lambda_j$, 
provided that $\tau_j$ is chosen such that $1-R_A(j,j)\tau_j \neq 0$.

Similarly, if $|R_A(j,j)| \geq |R_B(j,j)|$, 
\eq{eq:Mj2}
M_A(j,j) = \tau_j R_A(j,j), \quad M_B(j,j) = \tau_j R_B(j,j).
\en
Hence, by choosing $\tau_j \neq 0$, we get $M_A(j,j)/M_B(j,j) =  R_A(j,j)/R_B(j,j) \equiv \lambda_j$.
\end{proof}


Given the traditional generalized Schur decomposition~\eqref{eq:gschur}, Theorem~\ref{thm:gschur2} suggests a simple approach
for obtaining the ``Q-free'' form~\eqref{eq:gschur2}. Namely, one starts with setting up the upper triangular matrices $G_1$, $G_2$,
and $G$ defined according to Lemma~\ref{lem:triu}, and then applies~\eqref{eq:M} to evaluate $M_A$ and $M_B$. The right Schur basis $V$ is 
the same in~\eqref{eq:gschur} and~\eqref{eq:gschur2}.      
%
Note that the definition~\eqref{eq:M} of the factors $M_A$ and~$M_B$ does not rely on inverting $R_A$ or $R_B$ and only requires an inverse 
of a triangular matrix $G$ which has all ones on the diagonal. 
Hence, 
it avoids inverting (nearly) singular triangular matrices.
Additionally, the proposed formulas~\eqref{eq:G1} and~\eqref{eq:G2}, as well as~\eqref{eq:Mj1},  
always have the largest modulus number between $R_A(j,j)$ and $R_B(j,j)$ in the denominator, which 
mitigates 
potential numerical issues introduced by the small diagonal elements of $R_A$ and $R_B$.

Clearly, the choice of $G_1$ and $G_2$ in~\eqref{eq:G1} and~\eqref{eq:G2}, and hence
decomposition~\eqref{eq:gschur2}, is not unique, as it depends on the value of the parameter $\tau_j$ and allows freedom in the choice
of entries above diagonal in $G_1$ and $G_2$. 
Therefore, in order to fix particular $M_A$ and $M_B$, in practice, we choose $G_1$ and $G_2$ to be diagonal and set
$\tau_j = 0$ if $|R_A(j,j)| < |R_B(j,j)|$ and $\tau_j = 1$, otherwise. This yields the diagonal matrices 
\eq{eq:G1_fix}
G_1(j,j) = 
\left\{
\begin{array}{cl}
 \displaystyle  0, & |R_A(j,j)| < |R_B(j,j)| \\
 \displaystyle \frac{1 - R_B(j,j)}{R_A(j,j)}, & \mbox{otherwise} 
\end{array} 
\right.
\en
and 
\eq{eq:G2_fix}
G_2(j,j) = 
\left\{
\begin{array}{cl}
 \displaystyle  1/R_B(j,j) , & |R_A(j,j)| < |R_B(j,j)| \\
 \displaystyle 1, &  \mbox{otherwise} 
\end{array} ;
\right.
\en
for which, by~\eqref{eq:Mj1}, $M_A(j,j) = R_A(j,j)/R_B(j,j)$, $M_B(j,j) = 1$ if $|R_A(j,j)| < |R_B(j,j)|$, and, by~\eqref{eq:Mj2}, 
$M_A(j,j) = R_A(j,j)$, $M_B(j,j) = R_B(j,j)$, otherwise.
Note that it is also possible to choose $\tau_j$ that give $M_A$ and $M_B$, such that $|M_A(j,j)|^2 + |M_B(j,j)|^2 = 1$, but our implementation follows the simple formulas~\eqref{eq:G1_fix} and~\eqref{eq:G2_fix}.

\section{The GPLHR algorithm for computing a partial Schur form}\label{sec:pqr}

For \textit{Hermitian} problems, a class of powerful eigensolvers is based on preconditioned block iterations; e.g.,~\cite{Knyazev:01, Quillen.Ye:10, Ve.Kn:14TR}.
Such methods fit into a unified framework consisting of~two key ingredients: generation of the trial (or search) subspace
and extraction of the approximate eigenvectors. Namely, given a number of eigenvector approximations
and a \textit{preconditioner}, 
at each step $i$, 
a low-dimensional trial subspace $\mathcal{Z}^{(i)}$ is constructed.
If properly chosen, $\mathcal{Z}^{(i)}$ contains improved approximations to 
the wanted eigenvectors, which are extracted from 
the subspace using a suitable projection procedure and are then used to start a new 
iteration. In this section, we extend this framework to the non-Hermitian case.

The results of Section~\ref{subsec:gschur_new} are central to our derivations. In particular, instead of the standard partial generalized Schur decomposition~\eqref{eq:gschur},
we focus on the ``$Q$-free'' form~\eqref{eq:gschur2}, and seek approximation of the right Schur vectors $V$ and the associated triangular factors $M_A$ and $M_B$.
As we shall see, the approximation to the left Schur vectors $Q$, as well as to triangular 
matrices $R_A$ and $R_B$, 
can be easily obtained as a by product of 
the proposed scheme. 

\subsection{Construction of the trial subspace}\label{subsec:trial_gschur}
Let $V^{(i)}$, $M_A^{(i)}$, and $M_B^{(i)}$ be approximations to 
$V$, $M_A$, and $M_B$ in~\eqref{subsec:gschur_new} at an iteration~$i$, such that $V^{(i)}$ has $k$ orthonormal columns, 
and $M_A^{(i)}$ and $M_B^{(i)}$ are $k$-by-$k$ upper triangular. 
In order to define a trial subspace that can be used for searching a new approximation $V^{(i+1)}$, 
we adopt the following subspace orthogonal correction viewpoint.
%

We are interested in constructing a subspace $\mathcal{K}^{(i)}$ that provides a good representation of the  
correction $C^{(i)}$, 
such that 
\eq{eq:schur_cor}
A(V^{(i)} + C^{(i)}) (M_B^{(i)} + \Delta_B^{(i)})  = B (V^{(i)} + C^{(i)}) (M_A^{(i)} + \Delta_A^{(i)}), 
\en 
where $V^{(i)*}C^{(i)} = 0$; and $\Delta_A^{(i)}$, $\Delta_B^{(i)}$ are corrections of the triangular factors $M_A^{(i)}$ and $M_B^{(i)}$, 
respectively. Once the ``correction subspace'' $\mathcal{K}^{(i)}$ is determined, the trial subspace can be defined as the subspace sum $\mathcal{Z}^{(i)} = \mbox{col} \{ V^{(i)} \} + \mathcal{K}^{(i)}$,
where $\mbox{col} \{ V^{(i)} \}$ denotes the column space of $V^{(i)}$. 

After 
rearranging the terms in~\eqref{eq:schur_cor}, 
we get 
\[
AC^{(i)} M_B^{(i)} - B C^{(i)} M_A^{(i)} = - (A V^{(i)} M_B^{(i)} - B V^{(i)} M_A^{(i)}) + BV \Delta_A^{(i)} - AV \Delta_B^{(i)}, 
\]
where $V = V^{(i)} + C^{(i)}$ is the matrix of the exact right Schur vectors.
Then, using~\eqref{eq:gschur}, 
we arrive at the identity 
\eq{eq:schur_cor1}
AC^{(i)} M_B^{(i)} - B C^{(i)} M_A^{(i)} = - (A V^{(i)} M_B^{(i)} - B V^{(i)} M_A^{(i)}) + Q(R_B \Delta_A^{(i)} - R_A \Delta_B^{(i)}), 
\en
where $Q$ is the matrix of the exact left Schur vectors.
Let $P_{Q^{\perp}} = I  - Q^{(i)} Q^{(i)*}$ be an orthogonal projector onto $\mbox{col} \{ Q^{(i)} \}^{\perp}$, where 
$Q^{(i)}$ is a matrix whose orthonormal columns represent a basis of $\mbox{col} \{ \delta_B AV^{(i)} + \delta_A BV^{(i)} \}$ for
some scalars $\delta_A, \delta_B \in \mathbb{C}$, such that $| \delta_A | + | \delta_B | \neq 0$.    
Then, applying $P_{Q^{\perp}}$ to both sides of~\eqref{eq:schur_cor1} and neglecting the high-order term 
$(I  - Q^{(i)} Q^{(i)*}) Q(R_B \Delta_A^{(i)} - R_A \Delta_B^{(i)})$
gives 
the equation
\eq{eq:sylvester}
(P_{Q^{\perp}} A P_{V^{\perp}})  C M_B^{(i)} - (P_{Q^{\perp}} B P_{V^{\perp}} ) C M_A^{(i)} =  - P_{Q^{\perp}}  (A V^{(i)} M_B^{(i)} - B V^{(i)} M_A^{(i)} ), 
\en
whose solution $C$, constrained to satisfy $V^{(i)*}C = 0$, provides an approximation to the desired correction $C^{(i)}$ of~\eqref{eq:schur_cor}. 
Here,  $P_{V^{\perp}} = I  - V^{(i)} V^{(i)*}$ is an orthogonal projector onto $\mbox{col} \{ V^{(i)} \}^{\perp}$ and, hence,
$C = P_{V^{\perp}} C$.

Equation~\eqref{eq:sylvester} represents a \textit{generalized Sylvester equation}~\cite{simoncini_matrix_eq:14TR} and
can be viewed as a block extension of the standard single vector Jacobi--Davidson (JD) correction equation~\cite{Fokkema:Sleijpen.Vorst:98, Sleijpen.Vorst:96},
where the right-hand side is given by the projected residual of problem~\eqref{eq:gschur2}. 
Note that a connection between 
the subspace correction  
and solution of a Sylvester equation was also observed in~\cite{Philippe.Saad:07}, 
though not in the context of the generalized Schur form computation. 
Thus, a possible option is to (approximately) solve~\eqref{eq:sylvester}
for a block correction $C$ and then set $\mathcal{K}^{(i)} = \mbox{col} \{ C \}$.

However, 
with this approach,
it is not straightforward to solve the matrix equation~\eqref{eq:sylvester} efficiently. Under certain assumptions, one can reduce~\eqref{eq:sylvester} to the \textit{standard} Sylvester equation, but this is generally expensive for large problems.

Instead, we do not seek the solution of the generalized Sylvester equation, 
and use a different strategy to generate the correction subspace $\mathcal{K}^{(i)}$. 
To this end, we consider~\eqref{eq:sylvester} as an equation of the general form   
\eq{eq:operator}
L(C) = F, \quad V^{(i)*}C = 0,
\en 
where $L(C) \equiv (P_{Q^{\perp}} A  P_{V^{\perp}} ) C M_B^{(i)} -  (P_{Q^{\perp}} B  P_{V^{\perp}})  C M_A^{(i)}$ and 
$F \equiv -  P_{Q^{\perp}} (A V^{(i)} M_B^{(i)} - B V^{(i)} M_A^{(i)})$.
Since $L(\cdot)$ is a linear operator, standard subspace projection techniques for solving linear systems, e.g.,~\cite{Greenbaum:97, Saad:03}, 
can be formally applied to~\eqref{eq:operator}. 
Thus, 
we can define a preconditioned Krylov-like subspace for~\eqref{eq:operator}. 
However,
instead of using this subspace to solve~\eqref{eq:operator}, 
we place it directly in an eigensolver's trial subspace.



More precisely, let $T_L: \mbox{col}\{Q^{(i)}\}^{\perp} \rightarrow \mbox{col}\{V^{(i)}\}^{\perp}$ be a \textit{preconditioner} for the operator $L(\cdot)$. 
Then, following the analogy with the Krylov subspace methods for (block) linear systems~\cite{Greenbaum:97, Gutknecht:07, Saad:03}, 
one can expect that the solution $C$ of~\eqref{eq:operator} can be well represented in the \textit{preconditioned block Krylov subspace}
\[
\mbox{block\:span}\left\{T_L(F), T_L\left(L(T_L(F)), \ldots, (T_LL)^m(T_L(F))\right)\right\}, 
\]
generated by the operator $T_L(L(\cdot))$ and the starting block $T_L(F) \in \mathbb{C}^{n \times k}$,
which contains all blocks of the form $\sum_{\ell =0}^m (T_LL)^\ell (T_L(F))G_\ell$, 
with $G_\ell \in \mathbb{C}^{k \times k}$~\cite{Gutknecht:07}. In particular, this implies that each column of $C$ should be 
searched in 
\eq{eq:krylov_sep_block} 
\mathcal{K}^{(i)} \equiv \mathcal{K}_{m+1}^{(i)} = 
\text{col} \{ W^{(i)}, S_1^{(i)}, S_2^{(i)}, \ldots, S_m^{(i)} \} , 
\en
where the blocks $K = [W^{(i)}, S_1^{(i)}, S_2^{(i)}, \ldots, S_m^{(i)}]$ represent what we call the Krylov--Arnoldi~sequence, generated
by a preconditioned Arnoldi-like procedure for problem~\eqref{eq:operator}, or equivalently, 
for the generalized Sylvester equation~\eqref{eq:sylvester}.
This procedure is stated in Algorithm~\ref{alg:barnoldi}, where the input parameter $m$ determines the~subspace size.
\begin{algorithm}[!htbp]
\begin{small}
\begin{center}
  \begin{minipage}{5in}
\begin{tabular}{p{0.5in}p{4.5in}}
{\bf Input}:  &  \begin{minipage}[t]{4.0in}
                  preconditioning operator $T_L$, the parameter $m$.
                  \end{minipage} \\
{\bf Output}:  &  \begin{minipage}[t]{4.0in}
                  the Krylov--Arnoldi basis $K = [W, S_1, \ldots, S_m]$ 
                  \end{minipage}
\end{tabular}
\begin{algorithmic}[1]
\STATE $l \gets 0$; $W \gets \mbox{orth}\footnote{Throughout, $\mbox{orth}(V)$ denotes a procedure for orthonormalizing columns of $V$.}\left(T_L(F)\right)$; 
\FOR {$l = 1 \rightarrow m$}
     \STATE $S_{l} \gets T_L ( L (S_{l-1})) $; $S_l \gets S_l -  W( W^*S_l)$; 
     \FOR {$j = 1 \rightarrow l-1$}
         \STATE $S_l \gets S_l - S_j(S_j^*S_l)$; 
     \ENDFOR
     \STATE $S_l \gets \mbox{orth}(S_l)$; 
\ENDFOR
\STATE Return $K = [W, S_1, \ldots, S_m]$.
\end{algorithmic}
\end{minipage}
\end{center}
\end{small}
  \caption{Preconditioned block Arnoldi type procedure for equation~\eqref{eq:operator}}
  \label{alg:barnoldi}
\end{algorithm}

Thus, given the subspace~\eqref{eq:krylov_sep_block}, capturing the orthogonal correction $C$, the eigensolver's trial subspace $\mathcal{Z}^{(i)}$ can be 
defined as $\mbox{col}\{V^{(i)}\} + \mathcal{K}_{m+1}^{(i)}$,
i.e.,
\eq{eq:trial_partial}
\mathcal{Z}^{(i)} = \text{col} \{ V^{(i)}, W^{(i)}, S_1^{(i)}, S_2^{(i)}, \ldots, S_m^{(i)} \}
\en
Clearly, the Krylov--Arnoldi sequence vectors $K$, generated by Algorithm~\ref{alg:barnoldi}, represents a system of    
orthonormal columns 
that are orthogonal to $V^{(i)}$
due to the choice of the preconditioner $T_L: \mbox{col}\{Q^{(i)}\}^{\perp} \rightarrow \mbox{col}\{V^{(i)}\}^{\perp}$. 
Hence, the trial subspace $\mathcal{Z}^{(i)}$ in~\eqref{eq:trial_partial}
is spanned by orthonormal vectors, which gives a stable basis.

A desirable feature of the preconditioner $T_L$ for problem~\eqref{eq:operator} (or~\eqref{eq:sylvester}) is that it should 
approximate the inverse of the operator $L(\cdot)$. One way to construct such an approximation is to replace the upper triangular matrices $M_A^{(i)}$ and $M_B^{(i)}$ in the expression 
for $L(\cdot)$ by diagonals $\sigma_A I$ and $\sigma_B I$, respectively, where $\sigma_A/\sigma_B = \sigma$. 
This results in an approximation of $L(\cdot)$ that is given by the matrix 
$(I - Q^{(i)}Q^{(i)*})(\sigma_B A - \sigma_A B)(I - V^{(i)}V^{(i)*})$. Thus, one can choose the preconditioner as 
\eq{eq:prec_gen}
T_L = (I - V^{(i)} V^{(i)*}) T (I - Q^{(i)}Q^{(i)*}), 
\en
where $T \approx (\sigma_B A - \sigma_A B)^{\dagger}$. Throughout, we consider $T$ as a preconditioner for eigenvalue problem~\eqref{eq:gevp},
which should be distinguished from the preconditioner $T_L$ for the generalized Sylvester equation.
Note that, in practice, 
assuming that
$\sigma$ is a finite shift that is different from an eigenvalue, 
we can set $\sigma_A = \sigma$ and $\sigma_B = 1$, so that $T \approx (A - \sigma B)^{-1}$ is an approximate shift-and-invert operator.

Finally, given the definition~\eqref{eq:prec_gen} of the preconditioner $T_L$, we can derive explicit expressions for the blocks in the Krylov--Arnoldi 
sequence generated from Algorithm~\ref{alg:barnoldi}.
It is easy to check that 
\eq{eq:WA_gschur}
W^{(i)}  =  \mbox{orth} ( (I - V^{(i)}V^{(i)*}) T (I - Q^{(i)}Q^{(i)*}) (AV^{(i)}M_B^{(i)} - B V^{(i)} M_A^{(i)}) ),\\
\en
and, for $l = 1, \ldots, m$,
\eq{eq:SA_gschur}
\begin{array}{ccl}
\hat S_l^{(i)} & = & (I - V^{(i)}V^{(i)*}) T (I - Q^{(i)}Q^{(i)*}) (AS_{l-1}^{(i)} M_B^{(i)} - B S_{l-1}^{(i)} M_A^{(i)}), \\
S_l^{(i)} & = & \mbox{orth}(P^{(l-1)}_{S^{\perp}} (I - W^{(i)}W^{(i)*})  \hat S_l^{(i)}); 
\end{array}
\en
where $S_0^{(i)} \equiv W^{(i)}$ and $P^{(l-1)}_{S^{\perp}}$ is the orthogonal projector onto $\text{col}\{S_1^{(i)}, \ldots, S_{l-1}^{(i)}\}^{\perp}$,  
\eq{eq:projS}
P^{(l-1)}_{S^{\perp}} = (I - S_{l-1}^{(i)} S_{l-1}^{(i-1)*})(I - S_{l-2}^{(i)} S_{l-2}^{(i-1)*}) \cdots (I - S_{1}^{(i)} S_{1}^{(i-1)*}).
\en

We remark that the above construction of the trial subspace~\eqref{eq:trial_partial} is similar in spirit to 
the one used to devise the BIFP Krylov subspace methods in~\cite{Golub.Ye:02, Quillen.Ye:10} for computing extreme eigenpairs of Hermitian problems.
The conceptual difference, however, is that we put the block correction equation~\eqref{eq:sylvester} in the center stage, 
whereas the BIFP methods are based on Krylov(-like) subspaces delivered by a simultaneous solution of problems $(A - \sigma_j B) w = r$ 
for $k$ different shifts $\sigma_j$. 
In particular, this allows us to discover
the projectors $I - V^{(i)}V^{(i)*}$ and $I - Q^{(i)}Q^{(i)*}$
in~\eqref{eq:prec_gen}.
These projectors are then blended into formulas~\eqref{eq:WA_gschur}--\eqref{eq:projS} 
that define the trial subspace. Their presence 
has a strong effect on eigensolver's robustness, as we shall demonstrate in the numerical experiments of Section~\ref{sec:num}.



%

\subsection{The trial subspace for standard eigenvalue problem}\label{subsec:trial_stand}
If $B = I$, 
instead of the generalized form~\eqref{eq:gschur},
we seek the standard partial Schur decomposition 
\eq{eq:schur}
A V = V R,
\en 
where $V \in \IC^{n \times k}$ contains orthonormal Schur vectors and $R \in \IC^{k \times k}$ is upper triangular with 
the wanted eigenvalues of $A$ on its diagonal. It is clear that~\eqref{eq:schur} is a special case of the ``$Q$-free'' form~\eqref{eq:gschur2}
with $B = I$, $M_A = R$, and $M_B = I$. Therefore, the derivation of the trial subspace, described in the previous section, is directly applicable here. 

In particular, let $V^{(i)}$ be an approximate Schur basis, and let $M_A^{(i)}$ and $M_B^{(i)}$ be 
the associated upper triangular matrices that 
approximate $R$ and $I$, respectively.       
Then, following the arguments of Section~\ref{subsec:trial_gschur}, from~\eqref{eq:schur_cor} with $B = I$,
we get 
\eq{eq:schur_cor1_stand}
AC^{(i)} M_B^{(i)} - C^{(i)} M_A^{(i)} = - (A V^{(i)} M_B^{(i)} - V^{(i)} M_A^{(i)}) + V(\Delta_A^{(i)} - R \Delta_B^{(i)}). 
\en 
This equality is the analogue to~\eqref{eq:schur_cor1} for a standard eigenvalue problem. 
Here, in order to approximate the correction $C^{(i)}$, it is natural to apply the projector 
$P_{V^{\perp}} = I - V^{(i)}V^{(i)*}$ to both sides of~\eqref{eq:schur_cor1_stand} and neglect the term
$(I - V^{(i)}V^{(i)*})V(\Delta_A^{(i)} - R \Delta_B^{(i)})$. As a result, we obtain the equation    
\eq{eq:sylvester_st}
(P_{V^{\perp}}AP_{V^{\perp}}) C M_B^{(i)} - C M_A^{(i)} =  - P_{V^{\perp}} (A V^{(i)} M_B^{(i)} - V^{(i)} M_A^{(i)} ), 
\en
where the solution $C$, which is orthogonal to $V^{(i)}$, approximates the desired $C^{(i)}$. 
Note that~in contrast to the correction equation~\eqref{eq:sylvester}
for the generalized eigenvalue problem, which is based on two projectors $P_{V^{\perp}}$ and $P_{Q^{\perp}}$,~\eqref{eq:sylvester_st} 
involves only one projector~$P_{V^{\perp}}$. This is expected, as both $V^{(i)}$ and $Q^{(i)}$approximate the same
vectors,~$V$.    
 

Considering~\eqref{eq:sylvester_st} as an equation of the general form~\eqref{eq:operator}, 
where $L(C) = (P_{V^{\perp}} A P_{V^{\perp}}) C M_B^{(i)} - C M_A^{(i)}$ and 
$F = - P_{V^{\perp}} (A V^{(i)} M_B^{(i)} - V^{(i)} M_A^{(i)} )$,
we apply $m$ steps of Algorithm~\ref{alg:barnoldi} with the preconditioner
\eq{eq:prec}
T_L = (I - V^{(i)} V^{(i)*}) T (I - V^{(i)}V^{(i)*}). 
\en
This yields 
the trial subspace~\eqref{eq:trial_partial},
where
\eq{eq:WA_schur}
W^{(i)}  =  \mbox{orth} ( (I - V^{(i)}V^{(i)*}) T (I - V^{(i)}V^{(i)*}) (AV^{(i)} M_B^{(i)} - V^{(i)} M_A^{(i)}) ),\\
\en
and, for $l = 1, \ldots, m$,
\eq{eq:SA_schur}
\begin{array}{ccl}
\hat S_l^{(i)} & = & (I - V^{(i)}V^{(i)*}) T (I - V^{(i)}V^{(i)*}) (AS_{l-1}^{(i)} M_B^{(i)} - S_{l-1}^{(i)} M_A^{(i)}), \\
S_l^{(i)} & = & \mbox{orth}(P^{(l-1)}_{S^{\perp}} (I - W^{(i)}W^{(i)*})  \hat S_l^{(i)}). 
\end{array}
\en
As before, we assume that $S_0^{(i)} \equiv W^{(i)}$ in~\eqref{eq:SA_schur} and that 
$P^{(l-1)}_{S^{\perp}}$ is the orthogonal projector 
defined in~\eqref{eq:projS}; the preconditioner $T$ in~\eqref{eq:prec} 
is chosen as an approximation of the shift-and-invert operator $(A - \sigma I)\inv$.

\subsection{The harmonic Schur--Rayleigh--Ritz procedure}\label{subsec:SRR}

Let $\mathcal{Z}^{(i)}$ be a trial subspace of dimension $s = (m+2)k$ at iteration $i$ 
defined by~\eqref{eq:trial_partial}
with~\eqref{eq:WA_gschur}--\eqref{eq:projS}. 
We try to find a new orthonormal approximate 
Schur basis $V^{(i+1)}$ and the corresponding $k$-by-$k$ upper triangular matrices $M_A^{(i+1)}$ and $M_B^{(i+1)}$, 
such that each column of $V^{(i+1)}$ belongs to~$\mathcal{Z}^{(i)}$ and
$M_A^{(i+1)}, M_B^{(i+1)}$ approximate the triangular factors in~\eqref{eq:gschur2}. 

To fulfill this task, we use a 
harmonic Rayleigh--Ritz projection~\cite{Morgan:91, Morgan.Zeng:98}, adapted to the case 
of the ``Q-free'' 
Schur form~\eqref{eq:gschur2}. 
Specifically, we let $\mathcal{U}^{(i)} = ( A - \sigma B ) \mathcal{Z}^{(i)}$
be a \textit{test subspace}, where we assume that the target shift $\sigma$ 
is not an eigenvalue.
Then the 
approximations $V^{(i+1)}$, $M_A^{(i+1)}$, and $M_B^{(i+1)}$ 
can be determined by 
requiring 
each column of the residual of~\eqref{eq:gschur2}
to be orthogonal to this test subspace, i.e., 
\begin{equation}\label{eq:pg}
AV^{(i+1)} M_B^{(i+1)} - B V^{(i+1)}  M_A^{(i+1)} \perp \mathcal{U}^{(i)}.
\end{equation}   
%
%
This yields the projected problem
\begin{equation}\label{eq:proj_schur}
(U^* A Z) Y M_B^{(i+1)} = (U^* B Z) Y M_A^{(i+1)},
\end{equation}
where $Y \in \IC^{s \times k}$, $M_A^{(i+1)}, M_B^{(i+1)} \in \mathbb{C}^{k \times k}$ are unknown; and $Z, U  \in \mathbb{C}^{n \times s}$ contain orthonormal columns spanning 
$\mathcal{Z}^{(i)}$ and $\mathcal{U}^{(i)}$, respectively. Once we have computed $Y$, $M_A^{(i+1)}$, and $M_B^{(i+1)}$, 
approximations to the Schur vectors, determined by~\eqref{eq:pg}, are given by 
$V^{(i+1)} = Z Y$.
%
%
%

Let us now consider the solution of the projected problem~\eqref{eq:proj_schur}. We first show
that the projected matrix pair $(U^* A Z, U^* B Z)$ is regular. 
\begin{proposition}\label{prop:projp}
Let $Z, \,U = \text{\emph{orth}}((A - \sigma B) Z) \in \mathbb{C}^{n \times s}$ contain orthonormal basis vectors of the trial and test subspaces,
and 
assume that $\sigma$ is different from any eigenvalue of $(A, B)$. Then 
the projected pair $(U^* A Z, U^* B Z)$ is regular.     
\end{proposition}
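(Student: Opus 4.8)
The plan is to show that the pencil $\mu(U^*AZ) - \nu(U^*BZ)$ is not identically singular by exhibiting one value of $(\mu,\nu)$ — indeed the value corresponding to the shift $\sigma$ — at which it is nonsingular. The natural choice is $\mu = 1$, $\nu = \sigma$, so that I must argue that $U^*(A - \sigma B)Z$ is an $s \times s$ nonsingular matrix. First I would recall that $U$ is, by construction, an orthonormal basis for $\mathcal{U}^{(i)} = (A-\sigma B)\mathcal{Z}^{(i)}$, so there is a nonsingular $s\times s$ matrix $N$ (the change of basis between the orthonormal columns of $U$ and the columns of $(A-\sigma B)Z$) with $(A - \sigma B) Z = U N$. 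Left-multiplying by $U^*$ and using $U^*U = I$ gives $U^*(A-\sigma B)Z = N$, which is nonsingular.

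The one point that needs care — and I expect it to be the only real obstacle — is the implicit claim that $\mathcal{Z}^{(i)}$ and $\mathcal{U}^{(i)}$ both have full dimension $s$, i.e. that $(A-\sigma B)$ maps the $s$-dimensional space $\mathcal{Z}^{(i)}$ onto an $s$-dimensional space, so that $N$ above really is square and its nonsingularity is equivalent to its invertibility as a linear map. This is where the hypothesis that $\sigma \notin \Lambda(A,B)$ enters: since $(A,B)$ is regular and $\sigma$ is not an eigenvalue, $A - \sigma B$ is an invertible $n\times n$ matrix, hence injective, so $\dim\big((A-\sigma B)\mathcal{Z}^{(i)}\big) = \dim \mathcal{Z}^{(i)} = s$. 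Thus $U$ genuinely has $s$ orthonormal columns and the matrix $N = U^*(A-\sigma B)Z$ is $s\times s$; being the matrix of the injective map $A - \sigma B$ restricted to $\mathcal{Z}^{(i)}$ expressed in the bases given by the columns of $Z$ (domain) and of $U$ (codomain), it is nonsingular.

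Finally I would assemble the conclusion: the polynomial $\det\big(\mu\, U^*AZ - \nu\, U^*BZ\big)$ in $(\mu,\nu)$ takes the nonzero value $\det(N) \neq 0$ at $(\mu,\nu) = (1,\sigma)$, hence is not identically zero, which is precisely the statement that the projected pair $(U^*AZ,\, U^*BZ)$ is regular. I would also remark, for the reader's benefit, that $(1,\sigma)$ is therefore not among the generalized eigenvalues of the projected pair, consistent with the fact that $\sigma$ is not an eigenvalue of $(A,B)$ — though this remark is not needed for the proof itself. No step here requires more than the regularity of $(A,B)$, the assumption on $\sigma$, and the definition of $U$ via $\mathrm{orth}$, all of which are available from the preceding text.
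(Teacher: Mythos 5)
Your proof is correct and follows essentially the same route as the paper: both arguments reduce regularity of the projected pair to the nonsingularity of $U^*(A-\sigma B)Z$, using the factorization $(A-\sigma B)Z = UN$ with $N$ nonsingular because $A-\sigma B$ is invertible and $Z$ has full column rank. The only difference is cosmetic: you exploit $U^*U=I$ to get $U^*(A-\sigma B)Z = N$ directly, while the paper argues by contradiction and passes through the Hermitian positive semidefinite Gram matrix $Z^*(A-\sigma B)^*(A-\sigma B)Z$; your dimension-counting remark about $\mathcal{U}^{(i)}$ having full dimension $s$ is a welcome explicit justification of a point the paper leaves implicit.
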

\begin{proof}
Assume, on the contrary, that $(U^* A Z, U^* B Z)$ is singular. Then, by definition of a singular matrix pair,
regardless of the choice of $\sigma$, 
\eq{eq:det0}
\mbox{det}(U^* A Z - \sigma U^* B Z) = 0.
\en
Since $U$ is an orthonormal basis of $\mbox{col}\{ (A - \sigma B) Z \}$, we can write
$(A - \sigma B) Z  = UF$, where $F$ is an $s$-by-$s$ square matrix. 
This matrix is nonsingular, i.e., $\mbox{det}(F) \neq 0$, because $A - \sigma B$ is nonsingular. 
Hence, $U = (A - \sigma B) Z F\inv$, and
\begin{eqnarray*}
U^* A Z - \sigma U^* B Z  & = & F^{-*} Z^* (A - \sigma B)^* A Z - \sigma F^{-*} Z^* (A - \sigma B)^* B Z \\
& = &  F^{-*} Z^* (A - \sigma B)^* (A - \sigma B) Z.
\end{eqnarray*}
Thus, from the above equality and~\eqref{eq:det0}, we have
\[
0= \mbox{det}(U^* A Z - \sigma U^* B Z) = \mbox{det}( F^{-*}) \mbox{det}( Z^* (A - \sigma B)^* (A - \sigma B) Z ).
\]
Since $\mbox{det}(F) \neq 0$, this identity implies that $\mbox{det}( Z^* (A - \sigma B)^* (A - \sigma B) Z ) = 0$.
But the matrix $Z^* (A - \sigma B)^* (A - \sigma B) Z$ is Hermitian positive semidefinite and its singularity
implies that $\mbox{det}(A - \sigma B) = 0$, which contradicts the assumption that 
$\sigma \notin \Lambda(A,B)$.
\end{proof}

Thus, problem~\eqref{eq:proj_schur} is of the form~\eqref{eq:gschur2} and the matrix pair $(U^* A Z, U^* B Z)$ is regular, 
whenever
$\sigma$ is not an eigenvalue of $(A, B)$.
Therefore,~\eqref{eq:proj_schur} 
can be solved by the approach
described in Section~\ref{subsec:gschur_new}. 

Specifically, 
one first employs the standard sorted QZ algorithm~\cite{Moler.Stewart:73} to compute the 
\text{full} generalized Schur form of $(U^* A Z, U^* B Z)$. This will produce two unitary matrices $\bar Y_L, \bar Y_R \in \mathbb{C}^{s \times s}$ of the left and right generalized Schur
vectors, 
along with the upper triangular $\bar R_A, \bar R_B  \in \mathbb{C}^{s \times s}$, 
ordered in such a way that the ratios $\bar R_1(j,j)/\bar R_2(j,j)$ of the first 
$k$ diagonal elements 
are closest to $\sigma$, and the corresponding Schur vectors are located at the $k$ leading columns of $\bar Y_L$ and $\bar Y_R$.
Then, we let $Y_L = \bar Y_L(\mbox{\texttt{:,1:k}}), Y_R = \bar Y_R(\mbox{\texttt{:,1:k}}) \in \mathbb{C}^{s \times k}$, and obtain the desired Schur basis in~\eqref{eq:proj_schur} as $Y = Y_R$.  
The triangular factors are given as $M_A^{(i+1)} = G_2 G^{-1}\tilde R_A$ and $M_B^{(i+1)} = I - G_1 G^{-1} \tilde R_A$, where 
$G = \tilde R_A G_1 + \tilde R_B G_2$,
$\tilde R_A \equiv \bar R_A(\texttt{1:k,1:k})$ and $\tilde R_B \equiv \bar R_B(\texttt{1:k,1:k})$, and $G_1$, $G_2$ are diagonal matrices defined 
by~\eqref{eq:G1_fix} and~\eqref{eq:G2_fix} with $R_A$, $R_B$ replaced with $\tilde R_A$, $\tilde R_B$, respectively.

As a result, the described extraction approach, which we call a harmonic Schur--Rayleigh--Ritz (SRR) procedure,
constructs a new basis $V^{(i+1)} = Z Y$ of the approximate Schur vectors (the harmonic Schur--Ritz vectors)  
and the upper triangular matrices~$M_A^{(i+1)}$, $M_B^{(i+1)}$, such that 
the ratios of their diagonal entries are approximations to the desired eigenvalues.
Note that in the case of a standard eigenvalue problem, the proposed use of formulas~\eqref{eq:G1_fix} and~\eqref{eq:G2_fix} 
for evaluating the matrices $M_A^{(i+1)}$ and $M_B^{(i+1)}$
indeed guarantees that they converge to $R$ and $I$, as 
$\tilde R_A$ and $\tilde R_B$ get closer to $R$ and $I$, respectively.    

Although both the construction of the trial subspace and the presented harmonic SRR procedure
aim at finding the factors $V$, $M_A$, and $M_B$ of the ``Q-free'' form~\eqref{eq:gschur2}, it is easy to see that, as a by-product, 
the scheme can also produce approximations 
to the left Schur vectors $Q$ and the upper triangular factors $R_A$, $R_B$ of the conventional generalized
Schur decomposition~\eqref{eq:gschur} if $B \neq I$. Specifically, the former is given by $Q^{(i+1)} = U Y_L$, whereas the latter
correspond to $\tilde R_A$ and $\tilde R_B$. 
%


Note that the computed $Q^{(i+1)}$ can be conveniently exploited at iteration 
$i+1$ to construct the new test subspace $\mathcal{U}^{(i+1)} = (A - \sigma B) \mathcal{Z}^{(i+1)}$ and, if $B \neq I$, 
define the projector~$P_{Q^{\perp}}$, because $Q^{(i+1)}$
%
represents an orthonormal basis of $\mbox{col} \{ (A - \sigma B) V^{(i+1)} \}$.


\begin{proposition}\label{prop:U}
Let $Z, \,U = \text{orth}((A - \sigma B) Z) \in \mathbb{C}^{n \times s}$ contain orthonormal basis vectors of the trial and test subspaces. 
Assume that $V = ZY_R, \,Q = U Y_L \in \mathbb{C}^{n \times k}$ are the approximate right and left generalized Schur vectors resulting from the harmonic SRR procedure,
along with the upper triangular factors \textup{$\tilde R_A \equiv \bar R_A(\texttt{1:k,1:k})$} and \textup{$\tilde R_B \equiv  \bar R_B(\texttt{1:k,1:k})$}. 
Then,
\[
(A - \sigma B) V = Q (\tilde R_A - \sigma \tilde R_B),
\]  
i.e., $Q = U Y_{L}$ represents an orthonormal basis of $\text{\emph{col}}\{(A - \sigma B) V\}$.   
\end{proposition}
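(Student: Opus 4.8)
The plan is to start from the QZ-based construction of the harmonic SRR approximations and unwind the definitions. Recall that $Y_R = \bar Y_R(\texttt{:,1:k})$ and $Y_L = \bar Y_L(\texttt{:,1:k})$ are the leading $k$ columns of the unitary factors from the full generalized Schur decomposition of the projected pair $(U^*AZ, U^*BZ)$, so that $U^*AZ\, \bar Y_R = \bar Y_L \bar R_A$ and $U^*BZ\, \bar Y_R = \bar Y_L \bar R_B$ with $\bar R_A,\bar R_B$ upper triangular. Subtracting $\sigma$ times the second from the first and restricting to the leading $k$ columns of $\bar Y_R$ gives $U^*(A-\sigma B)Z\, Y_R = Y_L(\tilde R_A - \sigma \tilde R_B)$, using that $\bar R_A - \sigma \bar R_B$ is block upper triangular so its first $k$ columns, multiplied by the leading $k$ columns of $\bar Y_L$, involve only $\tilde R_A - \sigma \tilde R_B$. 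With $V = ZY_R$ this reads $U^*(A-\sigma B)V = Y_L(\tilde R_A - \sigma \tilde R_B)$.

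Next I would use the defining relation of the test basis, $(A-\sigma B)Z = UF$ with $F$ square and (by Proposition~\ref{prop:projp}'s proof) nonsingular, hence $U^*(A-\sigma B)Z = F$ and $(A-\sigma B)V = (A-\sigma B)ZY_R = UFY_R = U(U^*(A-\sigma B)Z)Y_R = U U^*(A-\sigma B)V$. Combining with the previous paragraph, $(A-\sigma B)V = U\big(U^*(A-\sigma B)V\big) = U Y_L(\tilde R_A - \sigma \tilde R_B) = Q(\tilde R_A - \sigma \tilde R_B)$, which is the claimed identity.

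It then remains to check that $Q = UY_L$ is an \emph{orthonormal} basis of $\mathrm{col}\{(A-\sigma B)V\}$. Orthonormality is immediate: $U$ has orthonormal columns and $Y_L$ is a submatrix of a unitary matrix, so $Q^*Q = Y_L^* U^* U Y_L = Y_L^* Y_L = I_k$. For the span, the identity $(A-\sigma B)V = Q(\tilde R_A - \sigma \tilde R_B)$ shows $\mathrm{col}\{(A-\sigma B)V\}\subseteq \mathrm{col}\{Q\}$; since both sides of a containment are $k$-dimensional once we know $\tilde R_A - \sigma\tilde R_B$ is nonsingular, equality follows. The nonsingularity of $\tilde R_A - \sigma \tilde R_B$ is exactly the statement that $\sigma$ is not among the $k$ selected Ritz values, equivalently $\det(U^*AZ - \sigma U^*BZ)\neq 0$ on the corresponding diagonal block — and Proposition~\ref{prop:projp} guarantees the projected pair is regular with $\sigma\notin\Lambda(U^*AZ,U^*BZ)$ because $Z^*(A-\sigma B)^*(A-\sigma B)Z$ is HPD, so in particular every diagonal entry of $\bar R_A - \sigma\bar R_B$, including those of the leading block, is nonzero.

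The main obstacle, such as it is, is bookkeeping with the leading-block restriction: one must be careful that restricting $\bar Y_R$ to its first $k$ columns interacts correctly with the upper-triangular structure of $\bar R_A-\sigma\bar R_B$ so that only the $k\times k$ leading submatrix $\tilde R_A - \sigma\tilde R_B$ appears (rather than spurious coupling to the trailing $s-k$ Schur vectors). This is where the ordering produced by the sorted QZ algorithm — wanted eigenvalues first — and the triangularity are used essentially; everything else is a short chain of substitutions built on Proposition~\ref{prop:projp} and the relation $(A-\sigma B)Z = UF$.
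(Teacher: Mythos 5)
Your proof is correct and follows essentially the same route as the paper's: project via $(A-\sigma B)Z = UF$ (equivalently, $UU^*$ acting as the identity on $\mathrm{col}\{(A-\sigma B)Z\}$) and then apply the leading-$k$-column QZ relations $(U^*AZ)Y_R = Y_L\tilde R_A$, $(U^*BZ)Y_R = Y_L\tilde R_B$. The only difference is that you additionally justify the exact span equality via the nonsingularity of $\tilde R_A - \sigma\tilde R_B$ (from Proposition~\ref{prop:projp}), a detail the paper leaves implicit.
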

\begin{proof}
%
Since $UU^*$ is an orthogonal projector onto $\text{col}\{(A - \sigma B) Z\}$ and $V = ZY_R$, we have
\[
(A - \sigma B) V  = UU^*(A - \sigma B) Z Y_{R} = U (U^*AZ) Y_{R} - \sigma U(U^* B Z) Y_{R}.
\]
But $(U^*AZ) Y_{R} = Y_L \tilde R_A$ and $(U^* B Z) Y_{R} = Y_L \tilde R_B$ (resulting from the QZ factorization of $(U^*AZ,U^*BZ)$), which, combined with the equality $Q = U Y_{L}$, gives the desired result.
\end{proof}




Finally, we remark that another standard way to extract Schur vectors is to~choose the test subspace $\mathcal{U}^{(i)}$ to be the same as the trial subspace 
$\mathcal{Z}^{(i)}$.
This yields the~standard 
Rayleigh--Ritz type procedure.
%
%
However, the drawback of this approach
is that 
Schur approximations 
tend to be close to
\textit{exterior} eigenvalues and may not be successful for extracting the \textit{interior} eigenvalues and their Schur vectors~\cite{Fokkema:Sleijpen.Vorst:98, Morgan.Zeng:98}.
Additionally, it does not produce approximations of the left Schur vectors $Q^{(i+1)}$ that may be of interest and are needed to define the 
projector~$P_{Q^{\perp}}$. 
Therefore, in our algorithmic developments, we rely on the systematic use of the 
\textit{harmonic} SRR procedure.

\subsection{Augmenting the trial subspace}\label{subsec:conj_dir}
%

At every iteration $i$, the construction of the trial subspace $\mathcal{Z}^{(i)}$ starts with the 
current approximate Schur basis~$V^{(i)}$, which is used to determine the remaining blocks in~\eqref{eq:trial_partial}.
A natural question is whether it is possible to further enhance~\eqref{eq:trial_partial} 
if an additional information is 
available at the present stage of computation, without incurring any significant extra calculations. 
Namely, we are interested in defining a block $P^{(i)}$ of additional search directions, such that
the $(m+3)k$-dimensional subspace    
\eq{eq:trial_partial1}
\mathcal{Z}^{(i)} = \text{col} \{ V^{(i)}, W^{(i)}, S_1^{(i)}, \ldots, S_m^{(i)}, P^{(i)} \}
\en
contains better approximations to the desired Schur basis than~\eqref{eq:trial_partial}.

When solving Hermitian eigenvalue problems, a common technique to accelerate the eigensolver's convergence is to utilize an 
approximate solution computed at the previous step. The same idea can be readily applied to the non-Hermitian case.
For example, given approximate right generalized Schur vectors $V^{(i-1)}$ from the previous iteration, we can use them to 
expand the trial subspace~\eqref{eq:trial_partial} by setting $P^{(i)}$ to $V^{(i-1)}$ or, in a LOBPCG fashion, 
to a combination of $V^{(i-1)}$ and $V^{(i)}$, such that
\eq{eq:P_lobpcg}
P^{(i)} = V^{(i)} - V^{(i-1)} C_V^{(i-1)},
\en
where $C_V^{(i-1)}$ is an appropriate matrix~\cite{Knyazev:01, Kn.Ar.La.Ov:07}.
Clearly, in exact arithmetic, both options lead to the same subspace~\eqref{eq:trial_partial1}.  

Unlike the case of Hermitian eigenvalue problems, where utilizing approximation from the previous step 
leads to a significant acceleration of the eigensolver's convergence, our numerical experience suggests a different picture for non-Hermitian problems.
While in many cases the use of the additional search directions based on $V^{(i-1)}$ indeed lead to a noticeable improvement in convergence, 
we also observe situations in which no or only minor 
benefit can be seen.

A better approach for augmenting~\eqref{eq:trial_partial} is to define $P^{(i)}$ as a block 
of $k$ extra approximate Schur vectors. 
Specifically, if $\bar Y_R$ contains the full (ordered) right Schur
basis of the projected pair~$(U^* A Z , U^* B Z)$, where $U$ and $Z$ are orthonormal bases of the test and trial subspaces 
$\mathcal{U}^{(i-1)}$ and $\mathcal{Z}^{(i-1)}$, respectively, then we let 
\eq{eq:P1}
P^{(i)} \equiv V_{k+1:2k}^{(i)} = Z \bar Y_R(\texttt{:},\texttt{k+1:2k}),
\en
where $Y_R(\texttt{:},\texttt{k+1:2k})$ denotes a submatrix of $Y_R$ lying in
the columns $k+1$ through $2k$. 
As we demonstrate numerically in Section~\ref{sec:num}, this choice generally 
leads to 
a better
augmented subspace~\eqref{eq:trial_partial1}, 
significantly accelerating and stabilizing the convergence. 
Therefore, we use it by default in our implementation.

The use of formulation~\eqref{eq:P1} 
can be viewed as a 
\textit{``thick restart''}~\cite{Morgan:96, Sleijpen.Vorst:96, Stathopoulos.Saad.Wu:98, Wu.Simon:00}, 
which retains more (harmonic) Ritz approximations than needed 
after collapsing the trial subspace.
In particular, it suggests that GPLHR should transfer $2k$ approximate Schur vectors
from one iterations to another. These Schur vectors  
correspond to the $2k$ approximate eigenvalues closest to $\sigma$.
The~$k$ leading approximations then constitute the block $V^{(i)}$, 
whereas the $k$ remaining ones are placed in $P^{(i)}$.

\subsection{The GPLHR algorithm}\label{subsec:qz}

We are now ready to combine the developments of the previous sections 
and introduce 
the \textit{Generalized Preconditioned Locally Harmonic Residual (GPLHR) algorithm}.

Starting with an initial guess $V^{(0)}$, at each iteration $i$, GPLHR  
constructs a trial 
subspace~\eqref{eq:trial_partial1}
using the preconditioned Krylov--Arnoldi sequence~\eqref{eq:WA_gschur}--\eqref{eq:projS} if $B \neq I$, or~\eqref{eq:WA_schur}--\eqref{eq:SA_schur}
otherwise, and performs the harmonic SRR procedure 
to extract an updated set of Schur vector 
approximations with the corresponding upper triangular factors. 
The detailed description 
is summarized in Algorithm~\ref{alg:gplhr-qz}. 

\begin{algorithm}[!htbp]
\begin{small}
\begin{center}
  \begin{minipage}{5in}
\begin{tabular}{p{0.5in}p{4.5in}}
{\bf Input}:  &  \begin{minipage}[t]{4.0in}
                  A regular pair $(A, B)$ of $n$-by-$n$ matrices, shift $\sigma \in \IC$ different from any eigenvalue of $(A,B)$, 
                  preconditioner $T$, 
                  starting guess of the Schur vectors $V^{(0)} \in \IC^{n \times k}$, and the subspace expansion parameter $m$. 
                  \end{minipage} \\
{\bf Output}:  &  \begin{minipage}[t]{4.0in}
                  If $B \neq I$, then approximate generalized Schur vectors $V, Q \in \IC^{n \times k}$ and the 
                  associated upper triangular matrices $R_A, R_B \in \IC^{k \times k}$ in~\eqref{eq:gschur}, such that $\lambda_j = R_A(j,j)/R_B(j,j)$
                  are the $k$ eigenvalues of $(A,B)$ closest to $\sigma$. \\
                  If $B = I$, then the Schur vectors $V \in \IC^{n \times k}$ and the associated triangular matrix 
                  $R \in \IC^{k \times k}$ in~\eqref{eq:schur}, such that $\lambda_j = R(j,j)$
                  are the $k$ eigenvalues of $A$ closest to $\sigma$. 
                  \end{minipage}
\end{tabular}
\begin{algorithmic}[1]
\STATE $V \gets \texttt{orth}(V^{(0)})$;  $Q \gets \texttt{orth}((A - \sigma B) V)$; $P \gets [ \ ]$;
\STATE $[R_A,R_B,Y_L,Y_R] \gets \mbox{ordqz}(Q^*AV,Q^*BV,\sigma)$\footnote{$[R_A,R_B,Y_L,Y_R] \gets \mbox{ordqz}(\Phi,\Psi,\sigma)$
computes the full generalized Schur decomposition for an input pair $(\Phi,\Psi)$, ordered to ensure that 
$|R_A(i,i)/R_B(i,i) - \sigma| \leq |R_A(j,j)/R_B(j,j) - \sigma|$ for $i < j$.};
$V \gets V Y_R$; $Q \gets Q Y_L$; 
\STATE Set $k$-by-$k$ diagonal matrices $G_1$ and $G_2$
by~\eqref{eq:G1_fix} 
and~\eqref{eq:G2_fix};
$G \gets R_A G_1 + R_B G_2$.  
\STATE $M_A \gets G_2 G\inv R_A$; $M_B \gets I - G_1 G\inv R_A$; 
\WHILE {convergence not reached}
  \IF{$B \neq I$}
     \STATE $W_A \gets AV - Q R_A$; $W_B \gets BV - Q R_B$;
     \STATE $W \gets (I - VV^*)T(I - QQ^*)(W_A M_B - W_B M_A)$; 
  \ELSE
     \STATE $W \gets (I - VV^*)T(I - VV^*)(AV M_B - V M_A)$; 
  \ENDIF
  \STATE $W \gets \mbox{orth}(W)$; $S_0 \gets W$; $S \gets [ \ ]$;
  \FOR {$l = 1 \rightarrow m$}
     \IF{$B \neq I$}
        \STATE $S_{l} \gets (I-VV^*)T(I-QQ^*)(A S_{l-1} M_B - BS_{l-1} M_A)$;
     \ELSE
        \STATE $S_{l} \gets (I-VV^*)T(I-VV^*)(A S_{l-1} M_B - S_{l-1} M_A)$;
     \ENDIF
     \STATE $S_l \gets S_l -  W( W^*S_l)$; $S_l \gets S_l - S(S^*S_l)$; 
$S_l \gets \mbox{orth}(S_l)$; $S \gets [S \ S_l]$;
  \ENDFOR
  \STATE $P \gets P - V(V^*P)$; $P \gets P - W(W^*P)$; $P \gets P - S(S^*P)$; $P \gets \mbox{orth}(P)$;
  \STATE Set the trial subspace $Z \gets [V, \  W, \  S_{1}, \ldots,  \  S_{m}, \  P ]$;
  \STATE $\hat Q \gets \texttt{orth}( (A - \sigma B) [W, \ S_1, \ \ldots, \ S_m, P] )$; $\hat Q \gets \hat Q - Q(Q^*\hat Q)$; 
  \STATE Set the test subspace $U \gets [Q, \hat Q]$;
$[\bar R_A,\bar R_B,\bar Y_L, \bar Y_R] \gets \mbox{ordqz}(U^*AZ,U^*BZ, \sigma)$; 
  \STATE $Y_{R} \gets \bar Y_R(\texttt{:,1:k})$, $Y_{L} \gets \bar Y_L(\texttt{:,1:k})$, $R_{A} \gets \bar R_A(\texttt{1:k,1:k})$, $R_{B} \gets \bar R_B(\texttt{1:k,1:k})$; 
  \STATE  $P \gets W Y_W  +  S_{1} Y_{S_1} + \ldots +  S_{m} Y_{S_m} +  P Y_{P}$, where $Y_R \equiv [Y_V^T, \ Y_W^T, \ Y_{S_1}^T, \ \ldots, \ Y_{S_m}^T, Y_P^T]^T$ is a conforming 
partitioning of $Y_R$; 
  \STATE  $V \gets V Y_V + P$; $Q \gets U Y_{L}$; 
\STATE $P \gets Z \bar Y_R(\texttt{:,k+1:2k}))$;\footnote{This step can be disabled if the LOBPCG-style search direction of step 26 is preferred.} 
\STATE Set $k$-by-$k$ diagonal matrices $G_1$ and $G_2$ by~\eqref{eq:G1_fix} and~\eqref{eq:G2_fix}; $G \gets R_A G_1 + R_B G_2$;  
\STATE $M_A \gets G_2 G\inv R_A$; $M_B \gets I - G_1 G\inv R_A$; 
\ENDWHILE
\STATE If $B = I$, then $R \gets M_B\inv M_A$.
\end{algorithmic}
\end{minipage}
\end{center}
\end{small}
  \caption{The GPLHR algorithm}
  \label{alg:gplhr-qz}
\end{algorithm}

An attractive feature of Algorithm~\ref{alg:gplhr-qz} is that it is based only on block operations, which enables efficient 
level-3 BLAS routines for basic dense linear algebra, as well as utilization of sparse matrix times block (matblock) kernels. 
The method also provides a unified treatment 
of standard and generalized eigenproblems.
In the latter case, it also allows handling infinite eigenvalues if the 
chosen shift $\sigma$ is sufficiently large. 

Some caution should be exercised when choosing the value of the shift $\sigma$. 
In~particular, it is expected that $\sigma$ 
is different from any eigenvalue $\lambda$ of~\eqref{eq:gevp}. 
While the shift is unlikely to be an eigenvalue,
it is generally desirable that $\sigma$ is not very close~to~$\lambda$, which can potentially lead to ill-conditioning of the SRR
procedure and of the preconditioner $T \approx (A - \sigma B)\inv$. 
An optimal choice of $\sigma$ is outside the scope of this~paper.

In the case of a generalized eigenproblem,
a possible way to determine convergence 
is based on assessing column norms of the blocks $W_A$ and $W_B$ in step 7. These~blocks represent 
natural residuals of the generalized Schur form~\eqref{eq:gschur}. Hence, the convergence of $j$ initial columns 
$v_1, \ldots, v_j$ and $q_1, \ldots, q_j$ in $V$ and $Q$ can be declared if 
$\|W_A(\texttt{:,j})\|^2 + \|W_B(\texttt{:,j})\|^2$
is sufficiently small.
If $B = I$, a similar approach, based on examining the norms of the column of the residual $A V M_B - V M_A$, can be used.


One can observe that the block $W$,
formed in step 8 of the algorithm, equals $(I - VV^*)T(I - QQ^*)(AVM_B - BV M_A)$, 
because by definition of $W_A$ and $W_B$,
\begin{eqnarray}
\nonumber (I - QQ^*)(W_A M_B - W_B M_A) & = & (I - QQ^*)((AV - QR_A) M_B - (BV - QR_B) M_A)  \\ 
\nonumber                              & = & (I - QQ^*)(AVM_B - BV M_A).
\end{eqnarray}
Thus, $W$ is indeed the projected preconditioned residual of problem~\eqref{eq:gschur2}. 

A common feature of block iterations is that some approximations can converge faster than others. If some columns $v_1, \ldots, v_j$ and $q_1, \ldots, q_j$ in $V$ and $Q$ have 
converged,
they can be ``locked'' using the ``soft locking'' strategy (see, e.g.,~\cite{Kn.Ar.La.Ov:07}), the same way as done for Hermitian eigenproblems. 
With this approach, 
in subsequent iterations, one retains the converged vectors in $V$ and $Q$, but removes the corresponding columns 
from the blocks $W, S_1, \ldots,  S_m$, and $P$.    
Note that locking of the Schur vectors in Algorithm~\ref{alg:gplhr-qz} should always be performed in order,
i.e., $v_{j+1}$ can be locked only after the preceding Schur vectors $v_1, \ldots, v_j$ have converged and been locked.

The use of ``locking'' can have a strong impact on the convergence behavior of the method, especially when a large number of vectors have converged. 
As locking proceeds,
an increasing number of columns of $W, S_1, \ldots,$  $S_m$, and $P$ is removed, 
leading to a shrinkage of the trial subspace.  As a result, the trial subspaces can become excessively
small, 
hindering the
convergence rate and robustness. 
In contrast to the Hermitian eigenproblems, the effect of the subspace reduction can be significantly more
pronounced in the non-Hermitian case, which generally requires searching in subspaces of a larger dimension.

In order to overcome this problem, we propose 
to automatically adjust the parameter $m$. In our implementation, we increase $m$ to the largest integer such that $m (k-q) \leq m_{0} k$, 
where $m_{0}$ is the initial value of $m$ and $q$ is the number of converged eigenpairs. Specifically, we recalculate $m$ as 
$m \gets \min \{ \lfloor m_0 k/(k-q) \rfloor, 20 \}$. 

The GPLHR algorithm can be implemented with $(m+1)$ multiplications of the matrix $A$, and the same number of multiplications of $B$, 
by a block of $k$ vectors per iteration. The allocated memory should be sufficient to store the basis of the trial and test subspaces $Z$ and $U$,
as well as the blocks $AZ$ and $BZ$, whose size depends on the parameter $m$, i.e., 
$s = (m+2)k$ for~\eqref{eq:trial_partial}, 
or $s = (m+3)k$ for the augmented subspace~\eqref{eq:trial_partial1}. 
Thus, to decrease computational cost and storage, the value of $m$ should be chosen as 
small as possible, but, on the other hand, 
should be large enough not to delay
a rapid and stable convergence. For example,
in most of our numerical experiments, we use $m = 1$. 

Note that step 28 of Algorithm~\ref{alg:gplhr-qz} is optional. It provides means to switch between the LOBPCG-style
search directions~\eqref{eq:P_lobpcg} and those defined by~\eqref{eq:P1}. We use the latter by default.  


Finally, we emphasize that Algorithm~\ref{alg:gplhr-qz} is capable of computing larger numbers of Schur vectors incrementally, using \textit{deflation},
similar to Hermitian eigenproblems. 
That is, if $V$ and $Q$ are the computed bases of $k$ right and left generalized Schur vectors, the 
additional $k$ generalized Schur vectors can be computed by applying GPLHR
to the deflated pair $((I - QQ^*) A (I - VV^*), (I - QQ^*) B (I - VV^*))$ if $B \neq I$, or to the matrix
$(I - VV^*) A (I - VV^*)$ otherwise.
This process can be repeated until the desired number of vectors is obtained. 
The ability to facilitate 
the deflation mechanism is an additional advantage of the generalized Schur formulations~\eqref{eq:gschur} and~\eqref{eq:gschur2}, as opposed to 
eigendecomposition~\eqref{eq:partial} for which the deflation procedure is not directly applicable.  

\subsection{Alternative formulas for residual computation}\label{subsec:other_res}
As has been pointed out in Section~\ref{subsec:gschur_new}, the ``Q-free'' generalized Schur form~\eqref{eq:gschur2} is not unique. 
Therefore, the computation of the residual in~\eqref{eq:WA_gschur} can, in principle, be based on a different formula rather than the 
expression $AV^{(i)}M_B^{(i)}-BV^{(i)}M_A^{(i)}$ with $M_A^{(i)},M_B^{(i)}$ given 
by~\eqref{eq:G1_fix},~\eqref{eq:G2_fix} and~\eqref{eq:M}.
For example, if either (or both) of the upper triangular factors $\tilde R_A, \tilde R_B$,
resulting from the harmonic projection, are 
nonsingular at all iterations, which is the most common case in the majority of practical computations,
then the residual can be formulated as $AV^{(i)}(\tilde R_A^{-1} \tilde R_B)-BV^{(i)}$ or $AV^{(i)}-BV^{(i)} (\tilde R_B^{-1} \tilde R_A)$. 

All these formulas are asymptotically equivalent, in the sense that the correspondingly defined residuals vanish as $V^{(i)}$ converges to the desired right Schur vectors. One would naturally question if their choice could affect the performance of GPLHR before it reaches the asymptotic mode. 
Our extensive numerical experience indicates that the effect of the choice of the residual formulas is very mild. Other factors, e.g., the quality of the preconditioner, the subspace dimension parameter $m$, appropriate use of projectors $P_{V^{\perp}}$ and 
$P_{Q^{\perp}}$ for developing the trial subspace, and the choice of the additional search directions $P^{(i)}$ all have much stronger impact on the behavior of GPLHR. Therefore, we always construct the residual as in~\eqref{eq:WA_gschur}, which is well-defined for all regular pencil $(A,B)$. 
The same considerations apply to the $S$-vectors~\eqref{eq:SA_gschur}.

\subsection{The GPLHR algorithm for partial eigenvalue decomposition}\label{subsec:gplhr-eig}
If eigenvalues of $(A, B)$ are non-deficient and the targeted eigenvectors $X$ are known to be 
reasonably well-conditioned, then one would prefer to tackle the 
partial eigenvalue decomposition~\eqref{eq:partial} directly, without resorting to the 
Schur form computations. Such situations, in particular, can arise when the non-Hermitian problem is obtained 
as a result of some perturbation of a Hermitian eigenproblem, e.g., as in the context of 
the EOM-CC~\cite{HeadGordon.Lee:97, el-str-book-Helgaker.Jorgensen.Olsen:2000}
or complex scaling methods~\cite{Balslev.Combes:71, Ho:83, Reinhardt:82} in quantum chemistry. 
Additionally, eigenvectors often carry certain physical information and
it can be of interest to track evolution of their approximations without extra transformations.

It is straightforward to apply the described framework to devise an eigenvector-based algorithm for computing the partial decomposition~\eqref{eq:partial}.
In particular, using exactly the same orthogonal correction argument, 
we can obtain a version of GPLHR that operates on the approximate eigenvectors rather than Schur vectors. This scheme, which we refer to as GPLHR-EIG, is stated in 
Algorithm~\ref{alg:gplhr-eig} of Appendix A.

\section{Preconditioning}\label{sec:prec}
 
The choice of the preconditioner is crucial for the overall performance of the GPLHR methods. While
the discussion of particular preconditioning options is outside the scope of this paper, below, we briefly 
address several general points related to a proper setting of the preconditioning procedure.

An attractive feature of the GPLHR algorithms is that the definition of an appropriate 
preconditioner $T$ is very broad. We only require $T$ to be a form of an approximate inverse of
$(\sigma_B A - \sigma_A B)$. Hence, $T$ can be constructed using any available preconditioning technique 
that makes the solution of the shifted linear systems $(\sigma_B A - \sigma_A B)w=r$
easier to obtain.
For a survey of various preconditioning
strategies we refer the reader 
to~\cite{Benzi.Golub.Liesen:05, Saad:03}. 
Note that, in practice, we expect $T \approx (A - \sigma B)\inv$
if eigenvalues of interest are finite, and $T \approx B^{\dagger}$ if infinite eigenvalues are wanted. For example, in the latter
case, one can let $T \approx  (B + \tau I)\inv$, where $\tau$ is a small regularization parameter.

If $T$ is not sufficient to ensure the eigensolver's convergence with a small search subspace, it can be used as a preconditioner for an approximate iterative 
solve of $(\sigma_B A - \sigma_A B)w=r$, e.g., using the GMRES algorithm~\cite{Saad.Schultz:86} or IDR($s$), a popular short-term recurrence 
Krylov subspace method for nonsymmetric systems~\cite{Sonneveld.Gijzen:08}. To enhance the performance by fully utilizing BLAS-3 operations, 
block linear solvers should be used whenever available.
Then the preconditioned linear solver itself can be used in place of the operator $T$ to precondition the GPLHR algorithm.
Thus, by setting the number of iterations of the linear solver or by adjusting its convergence threshold, one obtains a flexible framework 
for ``refining'' the preconditioning quality, which is especially helpful for 
computing interior eigenvalues.

Finally, if $T$ admits a fast computation, it can be periodically updated in the course of iterations, in such a way that the shift 
in the preconditioning operator is adjusted according to the current eigenvalue approximations. For example, 
this strategy was adopted in the GPLHR implementation~\cite{Zuev.Ve.Yang.Orms.Krylov:15}, where $T$ was diagonal. 


\section{Numerical experiments}\label{sec:num}

The goal of this section is two-fold. First, we would like to demonstrate numerically the significance 
of several algorithmic components of GPLHR, such as the use of 
additional search directions $P^{(i)}$ or the application of a projector prior to preconditioning with $T$.  
Secondly, we are interested in comparing GPLHR with a number of existing state-of-the-art eigensolvers
for non-Hermitian eigenvalue problems, which includes the block GD (BGD), the JDQR/JDQZ algorithm, and the implicitly restarted Arnoldi method
available in ARPACK~\cite{ARPACK:98}. 

\begin{table}[htb]
\centering
{\small
\begin{tabular}{|c|c|c|c|c|c|}
\hline 
Problem & Type & $n$ & $\sigma$ & Preconditioner & Spectr. region \tabularnewline
\hline 
\hline 
A15428 & stand. & $15428$ & $-40.871$ &  ILU($10^{-3}$)  & interior \tabularnewline
\hline 
AF23560 & stand. & $23560$ & $-40 + 300i$ &  ILU($10^{-3}$)  &largest mod. \tabularnewline
\hline 
CRY10000 & stand. & $10000$ & $8.0$ &  ILU($10^{-3}$)  &largest Re\tabularnewline
\hline 
DW8192 & stand. & $8192$ & $1.0$ &  ILU($10^{-3}$)  & rightmost \tabularnewline
\hline 
LSTAB\_NS & gen. & $12619$ & $0$ & GMRES(25)+LSC & rightmost  \tabularnewline
\hline 
MHD4800 & gen. & $4800$ & $-0.1+0.5i$ & $(A - \sigma B)^{-1}$ & interior \tabularnewline
\hline 
PDE2961 & stand. & $2961$ & $10.0$ &  ILU($10^{-3}$)  & largest Re \tabularnewline
\hline 
QH882 & stand. & $882$ & $-150 + 180i$ & GMRES(5)+ILU($10^{-5}$)  & interior \tabularnewline
\hline 
RDB3200L & stand. & $3200$ & $2i$ &  ILU($10^{-3}$)  & rightmost \tabularnewline
\hline 
UTM1700 & gen. & $1700$ & $0$ &  GMRES(15)+ILU($10^{-4}$)  & {leftmost} \tabularnewline
\hline 
\end{tabular}
}
\caption{{\small Test problems.}}
\label{tab:test}
\end{table}

Table~\ref{tab:test} summarizes the test problems considered throughout this section. For all problems, 
the shifts $\sigma$ are chosen to have physical meaning, i.e., they target the part of spectrum that is normally
sought for in practical applications. 
This targeted region of the spectrum is stated in the rightmost column of the table.  

The problems in Table~\ref{tab:test} come from a number of different applications. For~example, the complex 
symmetric matrix A15428 arises from the resonant state calculation
in quantum chemistry~\cite{Balslev.Combes:71, Ho:83, Reinhardt:82}.
Problem LSTAB\_NS is delivered by the linear stability analysis of a model of two dimensional incompressible fluid flow over a backward facing step, constructed using the IFISS software package~\cite{Elman.Ramage.Silvester:07}. The domain is $[-1, L] \times [-1, 1]$ with $[-1, 0]\times[-1, 0]$ cut out, where $L = 10$; the Reynolds number is $300$. 
A biquadratic/bilinear ($Q2$-$Q1$) finite element discretization with element width 
$1/8$ is used.
More details on this example can be found in~\cite{Xue.Elman:12}. 

The rest of the test matrices in Table~\ref{tab:test} are obtained from {\sc Matrix Market}\footnote{\url{http://math.nist.gov/MatrixMarket/}}. 
The three larger problems
AF23560, CRY10000, and DW8192 arise from the stability analysis of the Navier-Stokes equation, crystal growth simulation, and 
integrated circuit design, respectively. MHD4800 is a generalized eigenproblem given by an application in magnetohydrodynamics, 
where interior eigenvalues forming the so-called Alfv\'{e}n branch are wanted. Another interior eigenvalue problem is QH882,
coming from the power systems modeling. The matrices PDE2961 and RDB3200L are obtained from a 2D variable-coefficient linear elliptic partial 
differential equation and a reaction-diffusion Brusselator model in chemical engineering, respectively. UTM1700 corresponds 
to a generalized eigenproblem from nuclear physics (plasmas).   
All of our tests are performed in {\sc Matlab}. 

Throughout, unless otherwise is explicitly stated, the value of the trial subspace size parameter $m$ is set to $1$.
As a preconditioner $T$ we use triangular solves with incomplete $L$ and $U$ factors computed by the {\sc Matlab}'s \texttt{ilu} routine with 
thresholding (denoted by ILU($t$), where $t$ is the threshold value). If a more 
efficient preconditioner is needed then $T$ corresponds to several steps of the 
ILU-preconditioned GMRES, further referred to as GMRES($s$)+ILU($t$), where $s$ is the number of GMRES steps. The only exception is  
LSTAB\_NS, where the action of $T$ is achieved by GMRES(25) preconditioned by an ideal version of the least squares commutator (LSC) preconditioner \cite{Elman.Silvester.Wathen:14}. The default preconditioners for all test problems are also listed in Table~\ref{tab:test}.

\subsection{Effects of the additional search directions}

\begin{table}[htb]
\centering
{\small
\begin{tabular}{|c|c|c|c|}
\hline 
Problem &  $P^{(i)}_{\mbox{{\tiny thick}}}$  & $P^{(i)}_{\mbox{{\tiny LOBPCG}}}$  & W/o $P^{(i)}$  \tabularnewline
\hline 
\hline 
A15428 &  12 & 14 & 16\tabularnewline
\hline 
AF23560 &  10 & DNC & 153\tabularnewline
\hline 
CRY10000 &  27 & DNC & DNC\tabularnewline
\hline 
DW8192 &  118 & 55 & 329 \tabularnewline
\hline 
LSTAB\_NS & 38  & DNC & DNC \tabularnewline
\hline 
MHD4800 &  137 & DNC & DNC \tabularnewline
\hline 
PDE2961 &  13 & DNC & DNC\tabularnewline
\hline 
QH882 &  15 & DNC & 30\tabularnewline
\hline 
RDB3200L &  10 & 12 & 12\tabularnewline
\hline 
UTM1700 & 16  &  23 & 21 \tabularnewline
\hline 
\end{tabular}

}
\caption{{\small Numbers of iterations performed by GPLHR
to compute $k=5$ eigenpairs with different choices of the search directions $P^{(i)}$. 
}}
\label{tab:P}
\end{table}

In Table~\ref{tab:P}, we report numbers of iterations performed by the GPLHR methods to compute five eigenpairs 
with different choices of the search directions~$P^{(i)}$. In particular, we compare the default 
``thick-restarted'' option~\eqref{eq:P1}
with the LOBPCG style choice~\eqref{eq:P_lobpcg}, 
and the variant without~$P^{(i)}$, which corresponds to the non-augmented subspace~\eqref{eq:trial_partial}. 

As has already been mentioned, in practice, the quality of approximations produced by GPLHR should be assessed through 
norms of the Schur residuals. However, in order to facilitate comparisons with other methods, at each step, 
we transform the current approximate Schur basis $V^{(i)}$ into eigenvector approximations and declare convergence
of a given pair $(\lambda, x)$ using its relative eigenresidual norm. That~is, we consider $(\lambda, x)$ 
converged, and soft-lock it from further computations, if $\|A x - \lambda Bx\|/\|Ax\| < 10^{-8}$.  
Note that, in this case, locking is performed in a contiguous fashion,
i.e., eigenvector $x_{j+1}$ can be locked only after the preceding vectors $x_1, \ldots, x_j$ 
have converged and been locked.

Table~\ref{tab:P} clearly demonstrates that the choice $P^{(i)}=V_{k+1:2k}^{(i)}$, 
denoted by $P^{(i)}_{\mbox{{\tiny thick}}}$,
generally leads to the fastest and most 
robust convergence behavior of GPLHR. Such a construction of the search directions requires a negligibly low cost, without
extra matrix-vector products or preconditioning operations. However, their presence in the trial subspace indeed significantly accelerates and 
stabilizes the convergence, as can be observed by comparing the corresponding iteration counts with the case where the block $P^{(i)}$ 
is not appended to the subspace~\eqref{eq:P1}.    
Here, ``DNC'' denotes cases where the method failed to reduce the relative eigenresidual 
norm below the tolerance level of $10^{-8}$ for all eigenpairs within 500 iterations.

Furthermore, it is evident from Table~\ref{tab:P} that the choice 
$P^{(i)}_{\mbox{{\tiny thick}}}$
generally outperforms the LOBPCG style formula~\eqref{eq:P_lobpcg}, denoted by $P^{(i)}_{\mbox{{\tiny LOBPCG}}}$. 
Specifically, the former always leads to convergence and yields lower iteration counts for all of the test problems,
except for DW8192. In our experience, however, this situation is not common, and the use of~\eqref{eq:P1} 
typically results in a much more robust convergence behavior. 

\subsection{Expansion of the trial subspace}

The size of the GPLHR trial subspace is controlled by the parameter $m$. Generally, increasing $m$ 
leads to a smaller iteration count. However, since the 
expansion 
of the GPLHR trial subspace 
requires 
extra matrix-vector multiplications and 
preconditioning 
operations, using larger values of $m$ also increases the cost of each GPLHR step.
Therefore, the parameter $m$ should be chosen large enough to ensure stable convergence, 
but at the same time should be sufficiently small to maintain a relatively low cost of iterations.

\begin{figure}[ht]
\begin{center}%
    \includegraphics[width=6.4cm]{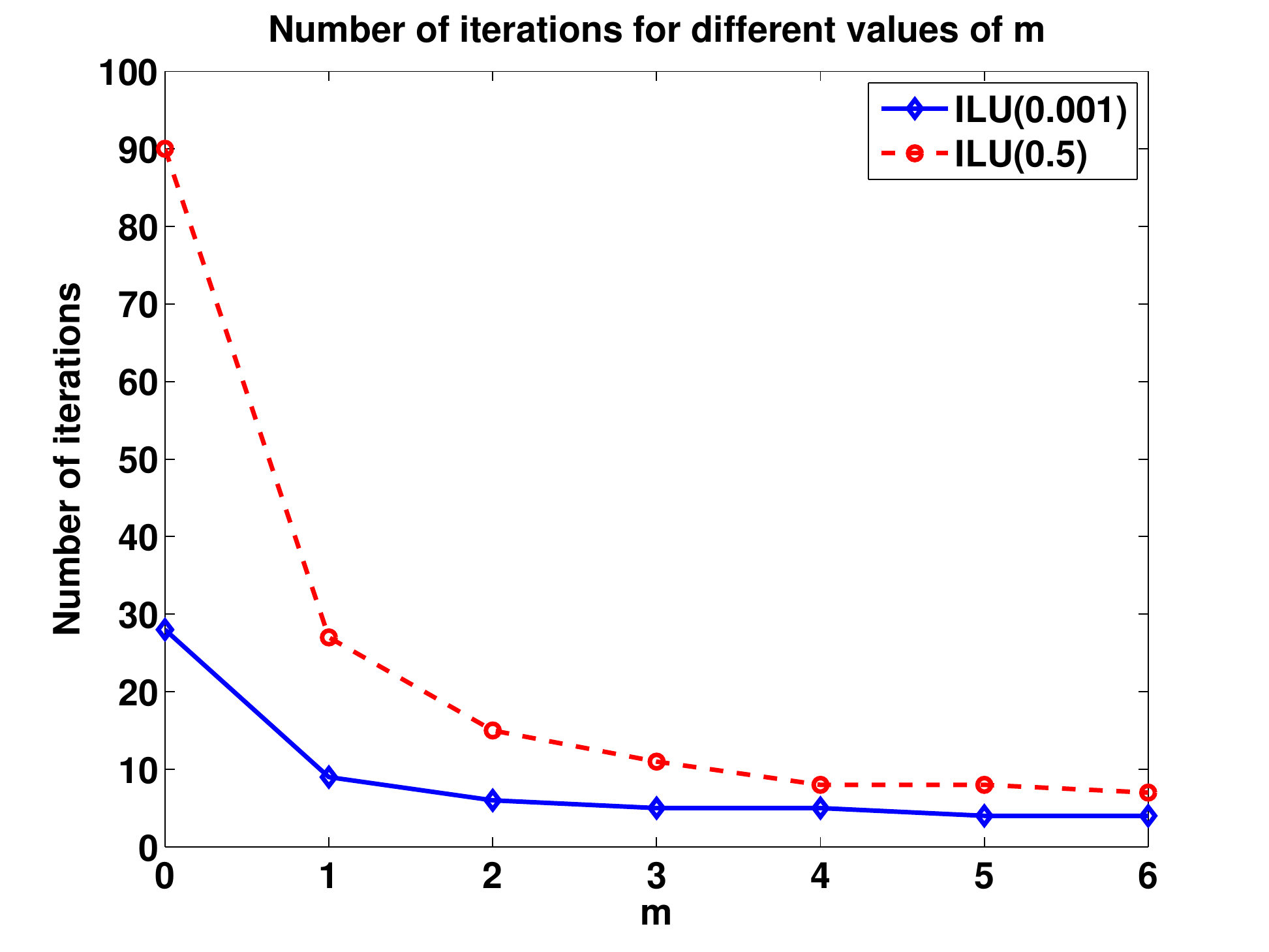}
    \includegraphics[width=6.4cm]{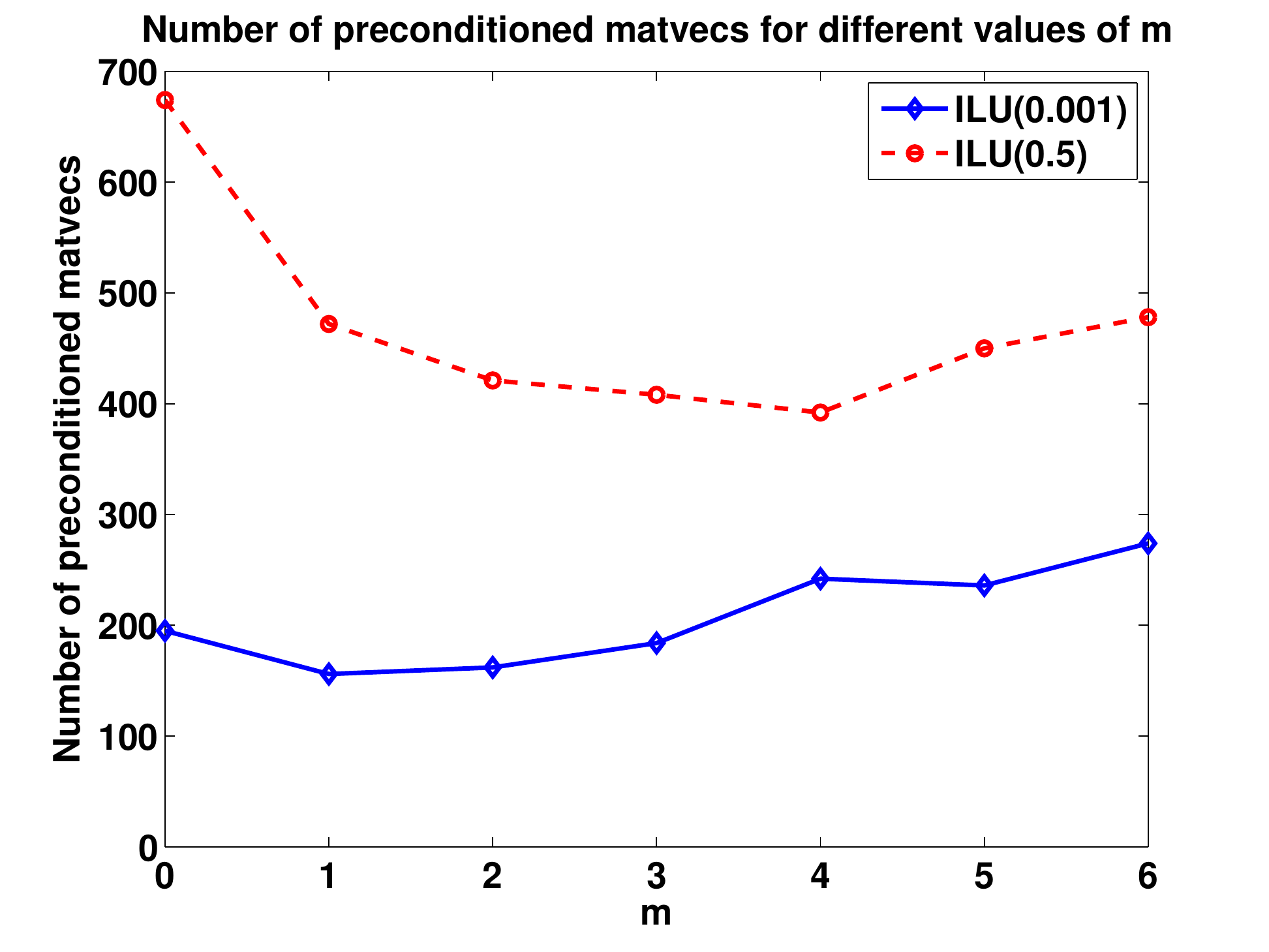}
\end{center}
\caption{
{\small 
Numbers of iterations (left) and preconditioned matrix-vector products (right) performed by GPLHR,
preconditioned with ILU\textup{($0.001$)} and ILU\textup{($0.5$)},
to compute $k=10$ eigenpairs of AF23560 with different values of the subspace parameter $m$. 
}
}
\label{fig:m}
\end{figure}

In most cases, using larger values of $m$ is counterproductive, as can be seen from Figure~\ref{fig:m}. The figure shows 
how the number of iterations and preconditioned matrix-vector products change as $m$ grows. If a good preconditioner is at hand,
such as ILU($0.001$) for AF23560 in the figure, then
it is common that the iteration count stabilizes at $m=1$ or $m=2$ and therefore further increase of the parameter is not needed
as it leads to redundant computations. However, increasing~$m$ can be more effective when 
less efficient preconditioners are used,
e.g., ILU($0.5$) in our test. In this case, 
a relatively low preconditioning quality is compensated by a larger trial subspace.
In particular, in the AF23560 example with ILU($0.5$),
the optimal value of $m$ in terms of computational work is $4$. Here, again, the convergence tolerance, with respect to the
relative eigenresidual norms, is set to $10^{-8}$.   

\subsection{Effects of projectors}
Our derivation of the GPLHR method in Section~\ref{sec:pqr}
suggested that 
the 
preconditioner~$T$ should be preceded by application of the projector $(I - Q^{(i)}Q^{(i)*})$ 
(or $(I - V^{(i)}V^{(i)*})$, if $B = I$), and then followed by the projection $(I - V^{(i)}V^{(i)*})$
(see~\eqref{eq:prec_gen} and~\eqref{eq:prec}). 
This motivated the occurrence of the corresponding projectors on both sides of $T$ at 
steps 8 and 15 (if $B \neq I$) and at steps 10 and 17 (if $B = I$) of Algorithm~\ref{alg:gplhr-qz}. 

While the presence of the left projector $I - V^{(i)}V^{(i)*}$ can be viewed as a part of the orthogonalization procedure on the trial subspace,
and is thus natural, 
it is of interest to see whether the action of the right projection, performed before the application of $T$, has any  
effect on the GPLHR convergence. For this reason, we compare the GPLHR scheme in Algorithm~\ref{alg:gplhr-qz} 
to its variant where the right projector is omitted. The corresponding iteration counts are reported in Table~\ref{tab:proj}.


\begin{table}[htb]
\centering
{\small
\begin{tabular}{|c|c|c|}
\hline 
Problem & With proj.  & W/o proj.\tabularnewline
\hline 
\hline 
A15428 &  14 & DNC \tabularnewline
\hline 
AF23560 &  9 & 9 \tabularnewline
\hline 
CRY10000 &  13 & 17\tabularnewline
\hline 
DW8192 &  44 & 45\tabularnewline
\hline 
LSTAB\_NS & 49  & 84 \tabularnewline
\hline 
MHD4800 &  12 & 12\tabularnewline
\hline 
PDE2961 & 10 & 11 \tabularnewline
\hline 
QH882 &  47 & 43 \tabularnewline
\hline 
RDB3200L &  12 & 21 \tabularnewline
\hline 
UTM1700 & 25  & 26 \tabularnewline
\hline 
\end{tabular}

}
\caption{{\small Numbers of iterations performed by GPLHR with and without
the projection at the preconditioning step to compute $k=10$ eigenpairs.
}}
\label{tab:proj}
\end{table}

It can be observed from Table~\ref{tab:proj} that, in most of the tests, 
applying the projector before applying
applying $T$ gives a smaller number of iterations. In several cases, the difference in the iteration count is substantial;
see the results for A15428, LS\_STAB, and RDB3200L. 
The only example, where the absence of the projector yields slightly less iterations is QH882. However, this only happens for 
a small value of the parameter $m$, and the effects of the projection become apparent and favorable as $m$ is 
increased and the convergence is stabilized.

%

\subsection{Comparison with BGD}
We now compare GPLHR to the classic BGD method used to solve non-Hermitian problems~\cite{Morgan:92, Sadkane:93}. 
This eigensolver is popular, in particular, in quantum chemistry; e.g.,~\cite{Zuev.Ve.Yang.Orms.Krylov:15}. 
For a fair comparison, in BGD,
we apply the harmonic Rayleigh--Ritz procedure~\cite{Morgan.Zeng:98} 
to extract approximate eigenvectors and use the same projected 
preconditioner~\eqref{eq:prec_gen} or~\eqref{eq:prec}, depending on whether the eigenproblem is generalized or standard, respectively. In fact, once equipped with the projected preconditioning, BGD can be viewed as a generalization of the 
JD methods~\cite{Fokkema:Sleijpen.Vorst:98, Sleijpen.Vorst:96} to the block case. 

To ensure that both schemes have the same memory constraint, we consider a restarted variant of BGD, where the 
search subspace is collapsed after reaching the dimension of $(m+3)k$, with the $k$ current approximate eigenvectors 
(the harmonic Ritz vectors) used to start the next cycle. We denote this scheme as BGD($(m+3)k$). 
In particular, if $m=1$, we get BGD($4k$).
 

In principle, it is hard to compare preconditioned eigensolvers without knowledge of a specific preconditioner used within the given 
application. Therefore, our first experiment, reported in Figure~\ref{fig:quant_chem}, attempts to assess the methods under different preconditioning 
quality, where $T$ ranges from a fairly crude approximation of $(A - \sigma B)^{-1}$ to a 
more accurate approximation.
\begin{figure}[ht]
\begin{center}%
    \includegraphics[width=6.4cm]{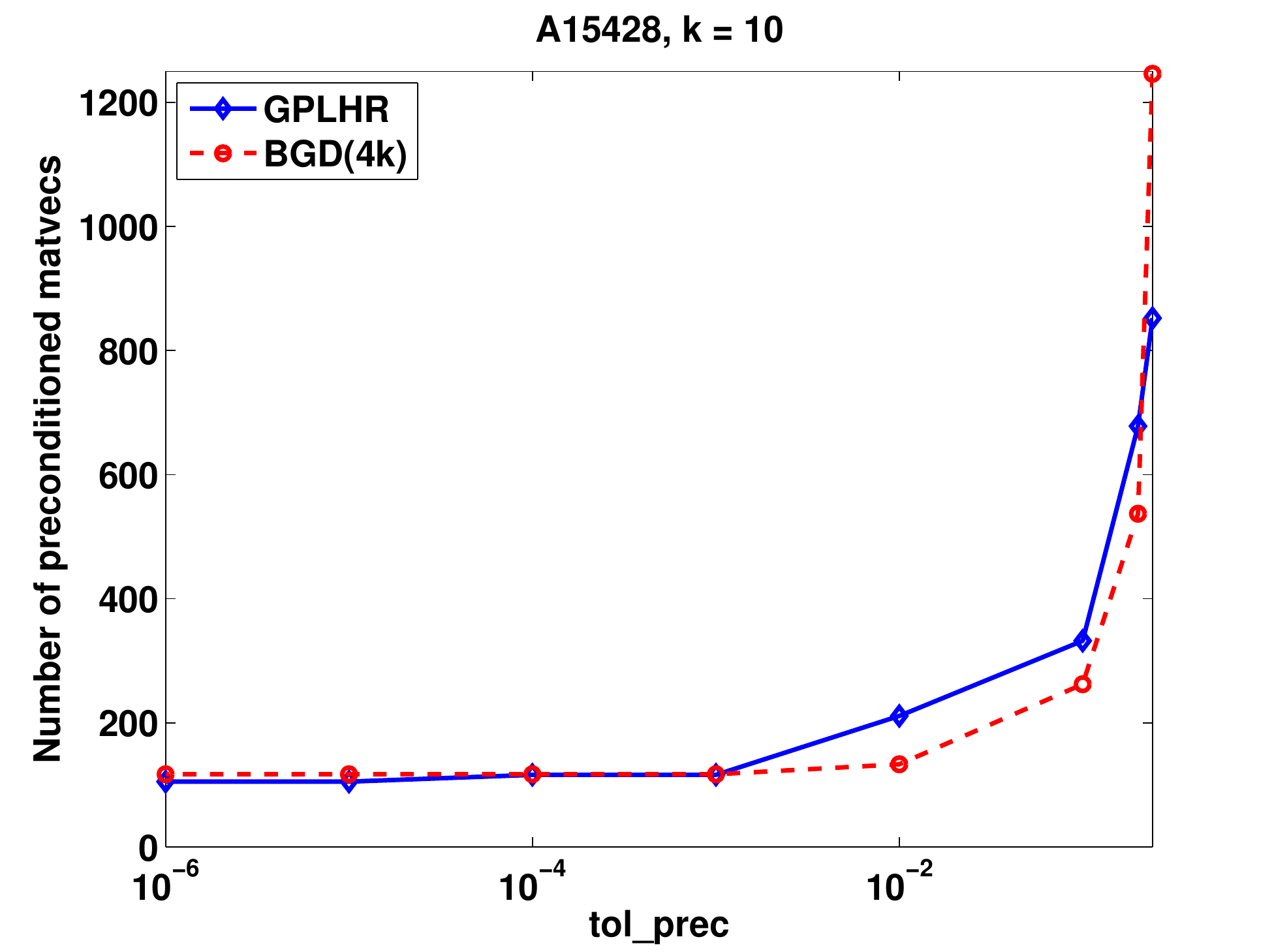}
    \includegraphics[width=6.4cm]{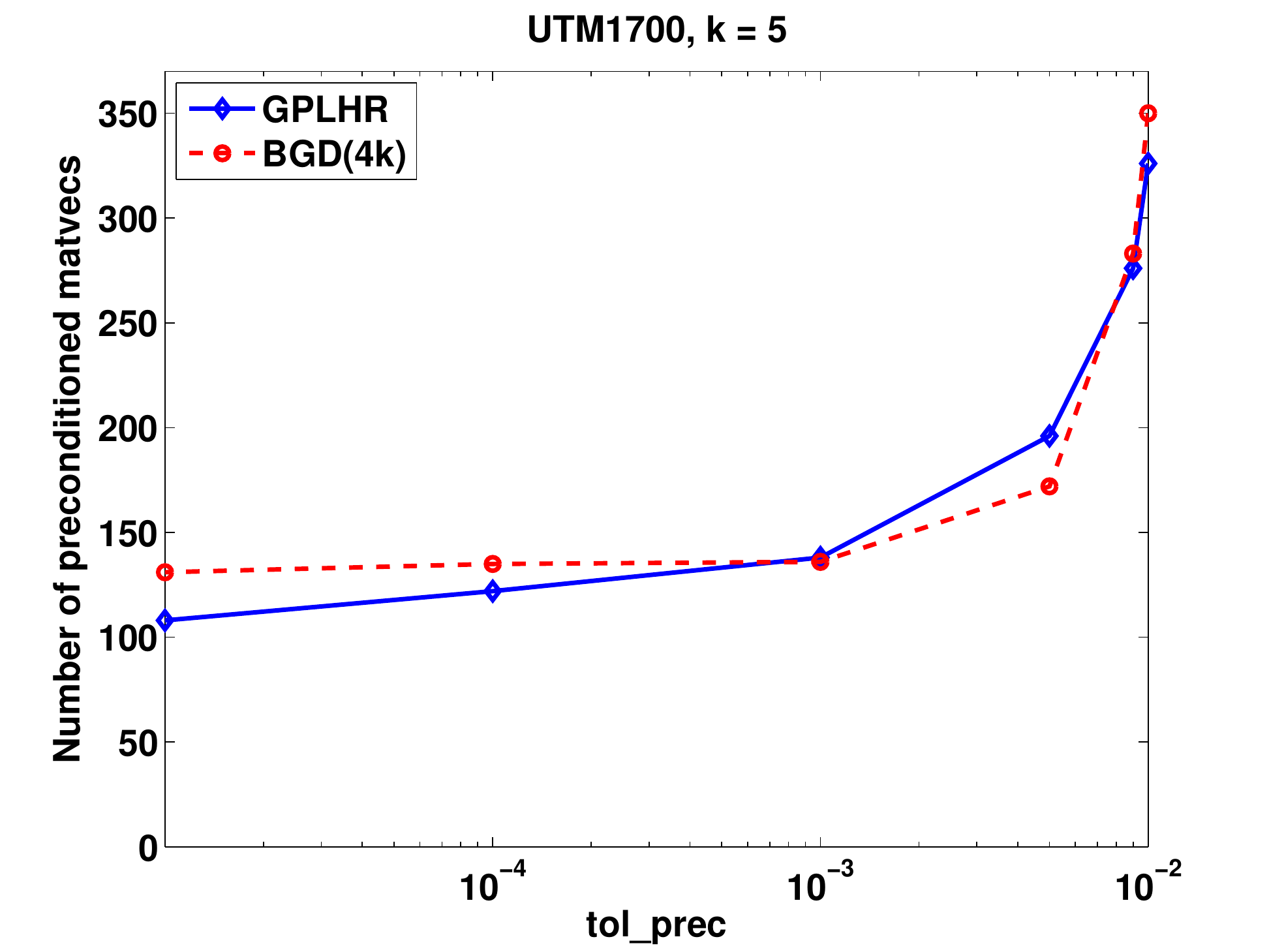}
\end{center}
\caption{
{\small
Comparison of GPLHR \textup{($m=1$)} and BGD\textup{($4k$)} with different preconditioning quality for computing several eigenpairs 
of A15428 (left) and UTM1700 (right) closest to~$\sigma$. The preconditioner
$T$ corresponds to an application of the ILU-preconditioned GMRES with stopping tolerance \textit{tol\_prec}.
}
}
\label{fig:quant_chem}
\end{figure}

In order to emulate a variety of preconditioners, we set $T$ to an approximate solve of $(A - \sigma B)w = r$ using the (full) 
preconditioned GMRES. The quality of $T$ can then be adjusted by varying the GMRES convergence tolerance, denoted by \textit{tol\_prec}.
Clearly, lower values \textit{tol\_prec} correspond to a 
more effective preconditioner,
whereas increasing \textit{tol\_prec} leads to 
its deterioration.

Figure~\ref{fig:quant_chem} shows the numbers of (preconditioned) matrix-vector products 
for different levels of preconditioning quality.
Here, GPLHR and BGD($4k$) are applied for computing several eigenpairs of the problems A15428 (left) and UTM1700 (right).
The reported preconditioned matrix-vector product (matvec) counts include only those performed by the eigensolvers, 
without the ones from the ``inner'' GMRES iterations, which are hidden in the application of $T$. 
As a GMRES preconditioner we use ILU($10^{-2}$) for A15428 and ILU($10^{-4}$) for UTM1700.   

The results in Figure~\ref{fig:quant_chem} are representative. 
They demonstrate that, under the same memory constraint, 
GPLHR tends to exhibit a better performance if the preconditioner is either reasonably strong 
(i.e., \textit{tol\_prec}$\leq 10^{-3}$ in both plots) or is relatively weak (i.e.,
$\textit{tol\_prec}$ around $10^{-1}$ on the left plot and around $10^{-2}$
on the right). 
At~the same time, one can see that BGD($4k$) can outperform GPLHR for an 
``intermediate''
preconditioning~quality, i.e., corresponding to \textit{tol\_prec} between $10^{-3}$ and $10^{-1}$ in the left,
and between $10^{-3}$ and $0.009$ on the right, plot of the~figure.

\begin{table}[htb]
\centering
{\small
\begin{tabular}{|l|c||c|c||c|c|}
\cline{3-6} 
\multicolumn{1}{c}{} & \multicolumn{1}{c|}{} & \multicolumn{2}{c||}{GPLHR} & \multicolumn{2}{c|}{BGD($4k$)}\tabularnewline
\hline 
Problem & $k$ & \#it  & \#pmv & \#it & \#pmv\tabularnewline
\hline 
\hline 
A15428 & 3 & 17 & 98 & DNC & DNC\tabularnewline
\cline{2-6} 
 & 5 & 11 & 100 & DNC & DNC\tabularnewline
\cline{2-6} 
 & 10 & 14 & 239 & 41 & 220\tabularnewline
\hline 
\hline 
AF23560 & 3 & 10 & 52 & DNC & DNC\tabularnewline
\cline{2-6} 
 & 5 & 10 & 84 & 48 & 118\tabularnewline
\cline{2-6} 
 & 10 & 9 & 156 & 39 & 232\tabularnewline
\hline 
\hline 
CRY10000 & 3 & 29 & 146 & 142 & 237\tabularnewline
\cline{2-6} 
 & 5 & 24 & 192 & DNC & DNC\tabularnewline
\cline{2-6} 
 & 10 & 14 & 245 & DNC & DNC\tabularnewline
\hline 
\hline 
DW8192 & 3 & 70 & 377 & DNC & DNC\tabularnewline
\cline{2-6} 
 & 5 & 51 & 474 & 363 & 1447\tabularnewline
\cline{2-6} 
 & 10 & 48 & 799 & 450 & 1808\tabularnewline
\hline 
\hline 
LSTAB\_NS & 3 & 51 & 5616 & 306 & 8843\tabularnewline
\cline{2-6} 
 & 5 & 37 & 6552 & DNC & DNC\tabularnewline
\cline{2-6} 
 & 10 & 41 & 15184 & DNC & DNC\tabularnewline
\hline 
\hline 
MHD4800 & 3 & 30 & 158 & DNC & DNC\tabularnewline
\cline{2-6} 
 & 5 & 170 & 1062 & DNC & DNC\tabularnewline
\cline{2-6} 
 & 10 & 12 & 206 & 79 & 432\tabularnewline
\hline 
\hline 
PDE2961 & 3 & 12 & 70 & 53 & 123\tabularnewline
\cline{2-6} 
 & 5 & 11 & 98 & 46 & 114\tabularnewline
\cline{2-6} 
 & 10 & 11 & 178 & 50 & 207\tabularnewline
\hline 
\hline 
QH882 & 3 & 18 & 588 & 23 & 348\tabularnewline
\cline{2-6} 
 & 5 & 14 & 708 & 22 & 450\tabularnewline
\cline{2-6} 
 & 10 & 52 & 4332 & 92 & 2190\tabularnewline
\hline 
\hline 
RDB3200L & 3 & 8 & 46 & 16 & 41\tabularnewline
\cline{2-6} 
 & 5 & 9 & 80 & 21 & 76\tabularnewline
\cline{2-6} 
 & 10 & 11 & 180 & 33 & 163\tabularnewline
\hline 
\hline 
UTM1700 & 3 & 17 & 1312 & 68 & 1760\tabularnewline
\cline{2-6} 
 & 5 & 16 & 2080 & 44 & 2208\tabularnewline
\cline{2-6} 
 & 10 & 25 & 6048 & 125 & 6704\tabularnewline
\hline 
\end{tabular}
}
\caption{{\small The numbers of iterations and preconditioned matrix-vector products 
performed by GPLHR \textup{($m=1$)} and BGD\textup{($4k$)}. 
}}
\label{tab:GD}
\end{table}

In Table~\ref{tab:GD}, we compare the numbers of iterations (\#it) and 
(preconditioned) matvecs (\#pmv) performed by GPLHR and BGD($4k$)
for all of our test problems with the default preconditioners listed in Table~\ref{tab:test}.
Here, the matvec counts also include those accrued at ``inner'' GMRES iterations 
within the preconditioning step.  

It can be seen from Table~\ref{tab:GD} that GPLHR is generally more robust than~BGD($4k$).
The former was always able to attain the requested level of the solution accuracy, whereas the latter 
sometimes failed to converge. Furthermore, one can see that in the examples where 
BGD($4k$) is successful, GPLHR typically performs fewer preconditioned matvecs and, hence, is more
efficient; with the only exceptions being 
the QH882 matrix and, to a lesser extent, RDB3200L. 

In contrast to GPLHR, 
BGD can perform the approximate eigenvector extraction from a trial subspace 
that expands throughout the iterations,
while preserving the same number of preconditioned matrix-vector products per step. 
Therefore, the BGD convergence can be accelerated if additional storage is available. 
However, the memory usage increase, sufficient to reproduce the GPLHR performance, can often be 
significant in practice and thus prohibitive for problems of a very large size.

\begin{figure}[ht]
\begin{center}%
    \includegraphics[width=6.4cm]{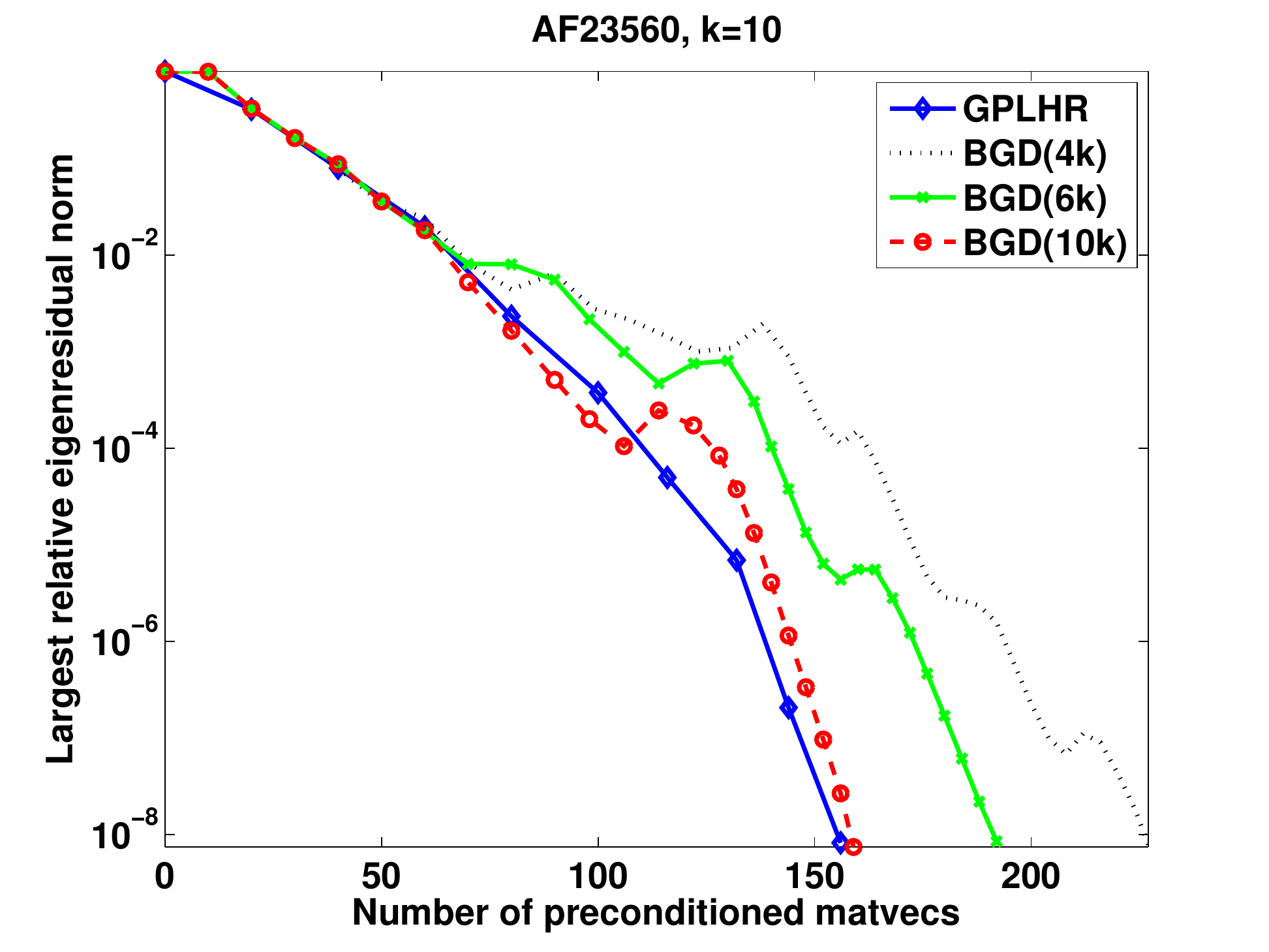}
    \includegraphics[width=6.4cm]{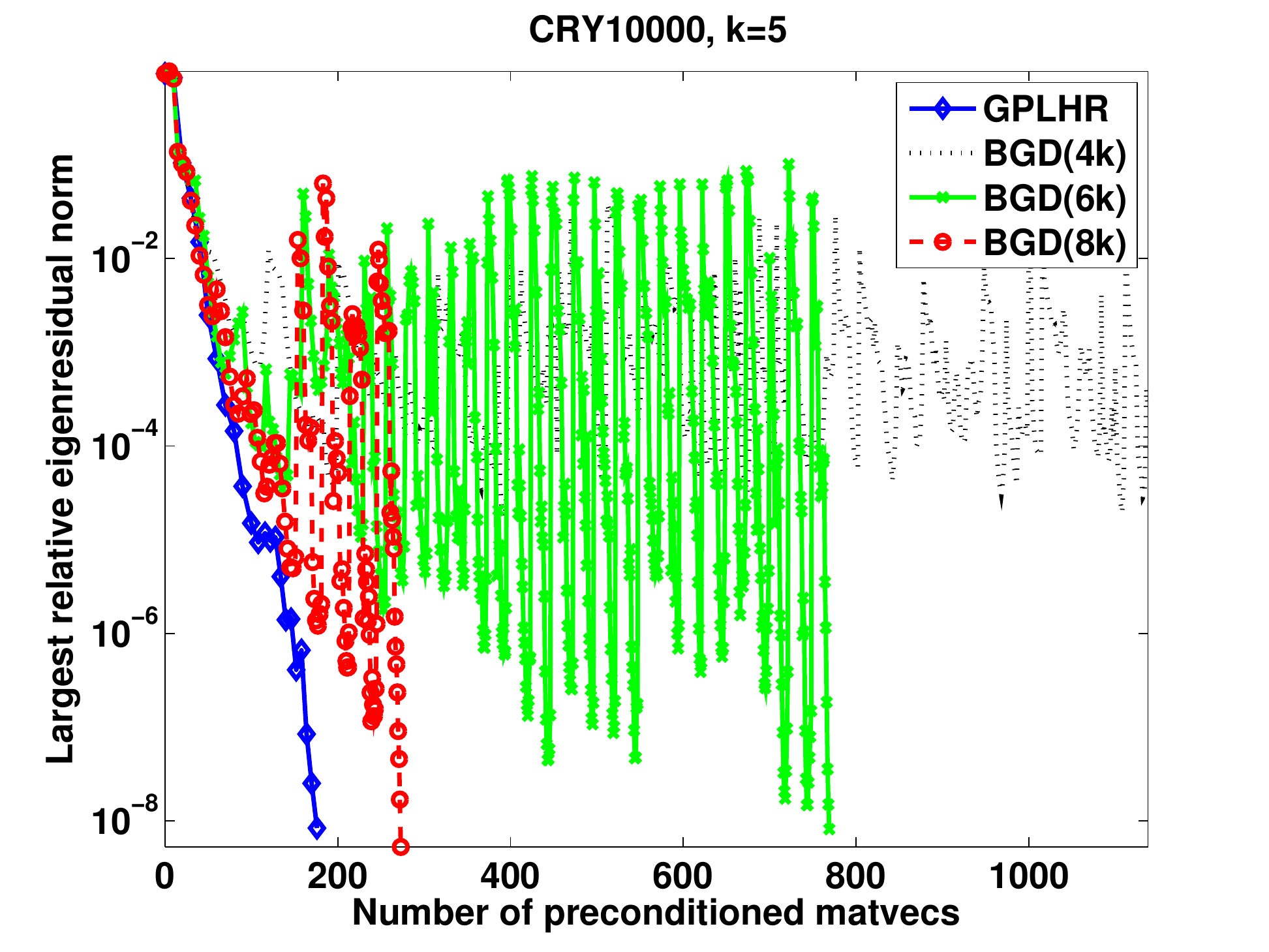} 
    \includegraphics[width=6.4cm]{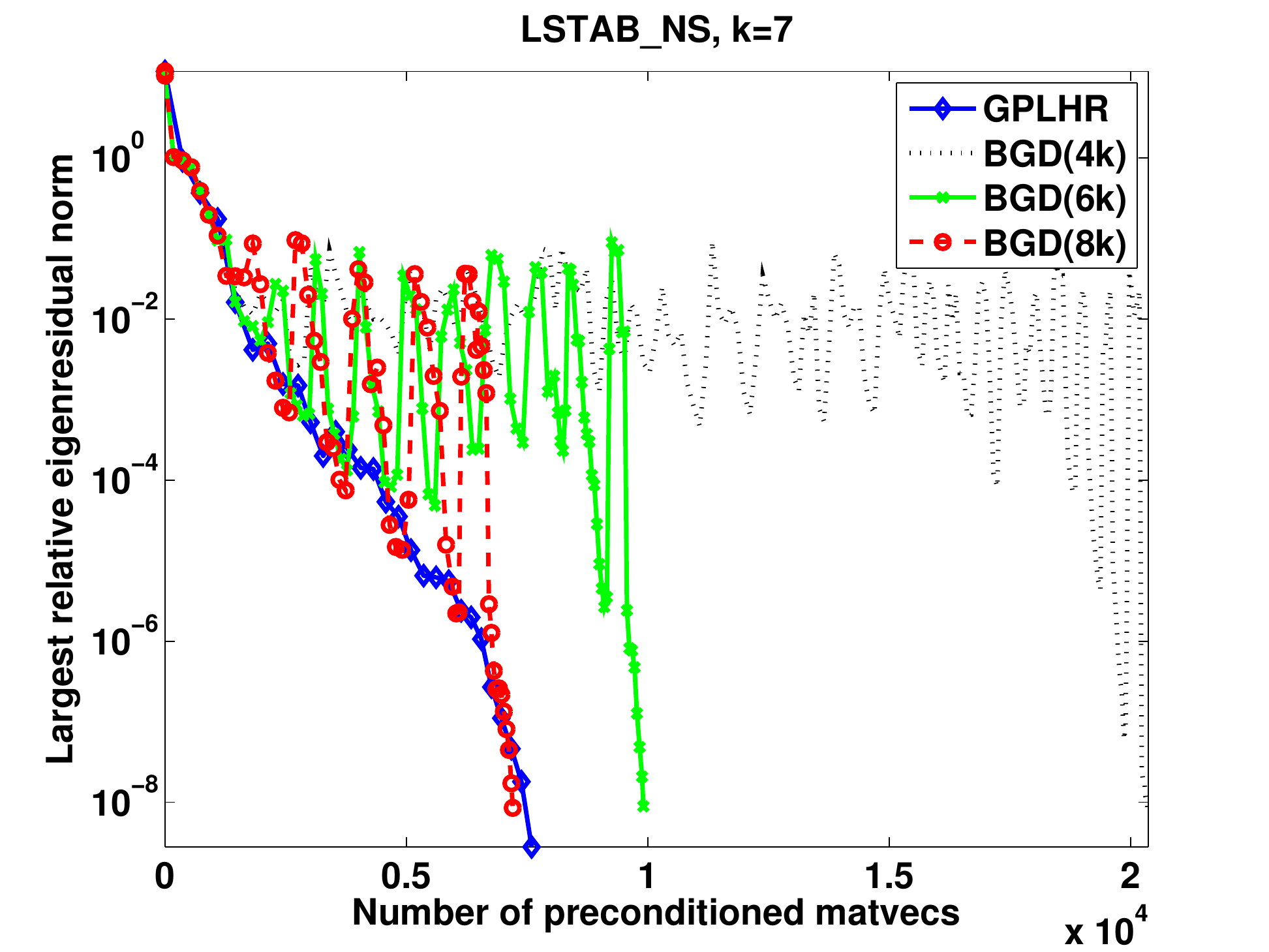}
    \includegraphics[width=6.4cm]{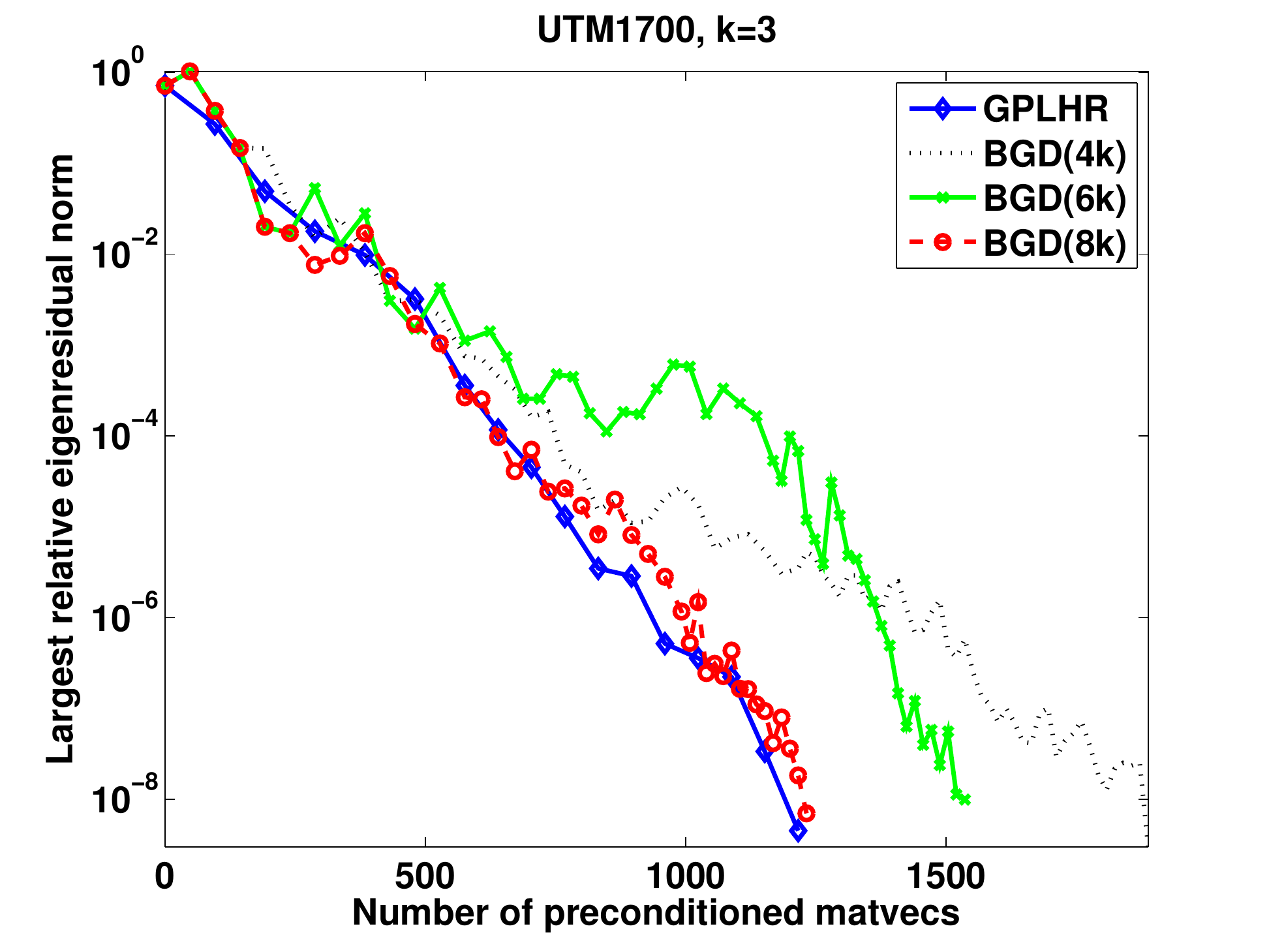}
\end{center}
\caption{
{\small
Comparison of GPLHR \textup{($m=1$)} and BGD\textup{($t$)} with different values of the restart parameter $t$.
}
}
\label{fig:CRY10000_AF23560}
\end{figure}

This point is demonstrated in Figure~\ref{fig:CRY10000_AF23560}, which shows that BGD can require 
around two to three times more storage than GPLHR
to achieve a comparable convergence rate. Thus, the GPLHR schemes can be preferred to BGD in cases where memory is tight.    
For all the test problems in this example we use the default preconditioners listed in Table~\ref{tab:test}.




\subsection{Comparison with JDQR/JDQZ}
As has been mentioned in introduction, the GPLHR method is substantially different from 
JDQR/JDQZ.
The former is based on block iterations, whereas the latter are single vector schemes 
which compute one eigenpair after another in a sequential fashion. 
The goal of our experiments below is to point out the advantages of using the block GPLHR iterations
for practical large-scale eigenvalue computations. 
In our comparisons, we use the \texttt{jdqr.m} 
and \texttt{jdqz.m} {\sc matlab} implementations of the JDQR/JDQZ 
algorithms\footnote{\url{http://www.staff.science.uu.nl/~sleij101/}}.
  
As a GPLHR preconditioner $T$ we employ a fixed number of iterations of preconditioned GMRES (\textit{$it_{G}$})
for the system $(A - \sigma B)w = r$, where $r$ is a vector to which the preconditioner $T$ is applied. 
Throughout, we use the standard single-vector GMRES to solve systems corresponding to different right-hand sides
separately. However, in practice, it may be more efficient to use 
block linear solvers for a simultaneous solution of all~systems. 
In the JDQR/JDQZ context, the preconditioner is applied through solving 
the correction equation. Therefore, in order to match the complexity and~quality of the GPLHR preconditioning to the correction equation solve, the 
solution of the latter is approximated by applying \textit{$it_{G}$} steps of GMRES with the same preconditioner. 
The number \textit{$it_{G}$} is set to the smallest value that yields the convergence of GPLHR.

In our tests, we use the harmonic Rayleigh--Ritz option for eigenvector extraction in JDQR/JDQZ.  
To ensure the same memory constraint for both methods, we set the maximum size of the JDQR/JDQZ search subspace to $4k$ (the parameter $m$ in GPLHR is set to~$1$ 
in all of the runs). After restarting, the JDQR/JDQZ retains $k+5$ harmonic Ritz vectors, which is a default value in the \texttt{jdqr.m} and \texttt{jdqz.m}.

In order to have a similar locking of the converged Schur vectors, we modify the GPLHR stopping criterion to be 
the same as in \texttt{jdqr.m} and \texttt{jdqz.m}. Namely, we~consider the right Schur vector $v_j$ and an approximate eigenvalue $\lambda_j$ 
to be converged only if the preceding vectors
$\bar V = [v_1, v_2, \ldots, v_{j-1}]$ have converged and the norm of the eigenresidual 
of the deflated problem $((I - \bar Q \bar Q^*)A(I - \bar V \bar V^*),(I - \bar Q \bar Q^*)B(I - \bar V \bar V^*))$, evaluated at 
$(\lambda_j, v_j)$, is below the given tolerance level, set to $10^{-8}$ in our tests. 
Here, the matrix $\bar Q = [q_1, q_2, \ldots, q_{j-1}]$ contains the converged 
left Schur vectors, and $\bar Q = \bar V$, if $B = I$.

\begin{table}[htb]

\centering
{\small

\begin{tabular}{|c|c|c||c|c|c||c|c|c|}
\cline{4-9} 
\multicolumn{1}{c}{} & \multicolumn{1}{c}{} & \multicolumn{1}{c|}{} & \multicolumn{3}{c||}{GPLHR} & \multicolumn{3}{c|}{JDQR}\tabularnewline
\hline 
Problem & $k$ & $it_{G}$ & \#it  & \#mv & \#prec & \#it & \#mv & \#prec\tabularnewline
\hline 
\hline 
A15428 & 1 & 15 & 13 & 417 & 416 & 20 & 321 & 341\tabularnewline
\cline{2-9} 
 & 3 & 10 & 16 & 949 & 946 & 45 & 502 & 541\tabularnewline
\cline{2-9} 
 & 5 & 10 & 14 & 1347 & 1342 & 57 & 642 & 685\tabularnewline
\hline 
\hline 
AF23560 & 1 & 0 & 13 & 27 & 26 & DNC & DNC & DNC\tabularnewline
\cline{2-9} 
 & 3 & 0 & 14 & 75 & 72 & 50 & 57 & 101\tabularnewline
\cline{2-9} 
 & 5 & 0 & 14 & 117 & 112 & 69 & 84 & 139\tabularnewline
\cline{2-9} 
 & 10 & 0 & 12 & 214 & 204 & 112 & 147 & 225\tabularnewline
\hline 
\hline 
CRY10000 & 1 & 3 & 46 & 369 & 368 & 62 & 249 & 311\tabularnewline
\cline{2-9} 
 & 3 & 3 & 58 & 1243 & 1240 & 182 & 735 & 911\tabularnewline
\cline{2-9} 
 & 5 & 4 & 45 & 1915 & 1910 & 218 & 1105 & 1309\tabularnewline
\cline{2-9} 
 & 10 & 3 & 42 & 3102 & 3092 & DNC & DNC & DNC\tabularnewline
\hline 
\hline 
DW8192 & 1 & 0 & 175 & 351 & 350 & 659 & 660 & 1318\tabularnewline
\cline{2-9} 
 & 3 & 0 & 81 & 423 & 420 & 274 & 281 & 549\tabularnewline
\cline{2-9} 
 & 5 & 0 & 59 & 535 & 530 & 270 & 258 & 541\tabularnewline
\cline{2-9} 
 & 10 & 0 & 49 & 831 & 821 & 336 & 371 & 673\tabularnewline
\hline 
\hline 
PDE2961 & 1 & 7 & 48 & 769 & 768 & DNC & DNC & DNC\tabularnewline
\cline{2-9} 
 & 3 & 0 & 20 & 117 & 114 & 48 & 55 & 97\tabularnewline
\cline{2-9} 
 & 5 & 0 & 17 & 149 & 144 & 57 & 72 & 115\tabularnewline
\cline{2-9} 
 & 10 & 0 & 15 & 242 & 232 & 86 & 121 & 173\tabularnewline
\hline 
\hline 
QH882 & 1 & 5 & 24 & 289 & 288 & 26 & 157 & 183\tabularnewline
\cline{2-9} 
 & 3 & 5 & 24 & 783 & 780 & DNC & DNC & DNC\tabularnewline
\cline{2-9} 
 & 5 & 5 & 17 & 893 & 888 & 41 & 261 & 288\tabularnewline
\cline{2-9} 
 & 10 & 5 & 50 & 4294 & 4284 & DNC & DNC & DNC\tabularnewline
\hline 
\hline 
RDB3200L & 1 & 0 & 31 & 63 & 62 & 54 & 55 & 109\tabularnewline
\cline{2-9} 
 & 3 & 0 & 20 & 121 & 118 & 71 & 78 & 143\tabularnewline
\cline{2-9} 
 & 5 & 0 & 22 & 213 & 208 & 102 & 117 & 205\tabularnewline
\cline{2-9} 
 & 10 & 0 & 20 & 394 & 384 & 183 & 218 & 367\tabularnewline
\hline 
\end{tabular}

}
\caption{
{\small
Comparison of GPLHR and JDQR algorithms for standard eigenproblems.
}}
\label{tab:jdqr}
\end{table}

In Table~\ref{tab:jdqr}, we report results for standard eigenproblems, where GPLHR is compared with the JDQR algorithm. 
The table contains numbers of eigensolvers' iterations ($\#$it), matrix-vector products ($\#$mv), and applications 
of the preconditioner ($\#$prec) used within GMRES solves.   
In all runs, we apply \textit{$it_{G}$} GMRES steps with 
ILU($10^{-2}$) as the GPLHR preconditioner~$T$ and as the JD correction equation solve, except for the QH882
problem where a stronger ILU($10^{-5}$) preconditioner is used within GMRES. 
Note that 
the zero number \textit{$it_{G}$} means that $T$ is applied through a single shot of the ILU preconditioner,
without the ``inner'' GMRES steps.


One can see from Table~\ref{tab:jdqr} that GPLHR generally performs significantly less iterations than 
JDQR. At the same time, each GPLHR iteration is more expensive, requiring $(m+1)k$ matrix-vector products and preconditioning operations,
while JDQR only performs a single matrix-vector multiply plus the correction equation solve per step. As a result, the total number
of matrix-vector products and preconditionings is, in most cases, larger for GPLHR, especially if larger numbers $k$ of 
eigenpairs are wanted. However, generally, the increase is mild, typically around a factor of 2. On the other hand, GPLHR allows for a possibility 
to group matrix-vector products into a single matrix-block multiply in actual parallel implementation, which can lead to a 2 to 3
times speedup, e.g.,~\cite{Aktulga.Buluc.Williams.Yang:14}, compared to separate multiplications involving single vectors, as in JDQR. 
Therefore, we
expect that GPLHR schemes will ultimately outperform the JDQR/JDQZ family 
for large-scale eigenproblems 
on modern parallel computers. Optimal parallel implementations of GPLHR are, however, outside the scope of this paper 
and will be subject of further study.  

Note that, if \textit{$it_{G} = 0$}, JDQR often produces a larger $\#$prec count, i.e., it requires more   
ILU applications compared to GPLHR.
This is because of the way the correction equation is solved in JDQR/JDQZ~\cite[Section 2.6]{Fokkema:Sleijpen.Vorst:98},
which requires an extra application of the preconditioner to form the correction equation. Thus, one can expect that 
GPLHR gives a faster time to solution in cases where matrix-vector products are relatively cheap, but preconditioning is expensive.   

\begin{table}[htb]

\centering
{\small
\begin{tabular}{|c|c|c||c|c|c||c|c|c|}
\cline{4-9} 
\multicolumn{1}{c}{} & \multicolumn{1}{c}{} & \multicolumn{1}{c|}{} & \multicolumn{3}{c||}{GPLHR} & \multicolumn{3}{c|}{JDQZ}\tabularnewline
\hline 
Problem & $k$ & $it_{G}$ & \#it  & \#mv & \#prec & \#it & \#mv & \#prec\tabularnewline
\hline 
\hline 
MHD4800 & 1 & 0 & 20 & 41 & 40 & DNC & DNC & DNC\tabularnewline
\cline{2-9} 
 & 3 & 0 & 15 & 79 & 76 & DNC & DNC & DNC\tabularnewline
\cline{2-9} 
 & 5 & 0 & 12 & 101 & 96 & DNC & DNC & DNC\tabularnewline
\cline{2-9} 
 & 10 & 0 & 9 & 165 & 155 & DNC & DNC & DNC\tabularnewline
\hline 
\hline 
LSTAB\_NS & 1 & 25 & 5 & 261 & 260 & DNC & DNC & DNC\tabularnewline
\cline{2-9} 
 & 3 & 25 & 49 & 5307 & 5304 & DNC & DNC & DNC\tabularnewline
\cline{2-9} 
 & 5 & 25 & 28 & 4893 & 4888 & DNC & DNC & DNC\tabularnewline
\cline{2-9} 
 & 10 & 25 & 36 & 12984 & 12974 & DNC & DNC & DNC\tabularnewline
\hline 
\hline 
UTM1700 & 1 & 15 & 5 & 161 & 160 & MCV & MCV & MCV\tabularnewline
\cline{2-9} 
 & 3 & 15 & 13 & 963 & 960 & 16 & 263 & 280\tabularnewline
\cline{2-9} 
 & 5 & 15 & 13 & 1669 & 1664 & 31 & 511 & 545\tabularnewline
\cline{2-9} 
 & 10 & 15 & 21 & 5034 & 5024 & 166 & 2691 & 2865\tabularnewline
\hline 
\end{tabular}

}
\caption{
{\small
Comparison of GPLHR and JDQZ algorithms for generalized eigenproblems.
}}
\label{tab:jdqz}
\end{table}

Table~\ref{tab:jdqz} contains the comparison results for generalized eigenproblems, where GPLHR is compared with the 
JDQZ algorithm.
It reports the same quantities as in Table~\ref{tab:jdqr}. Similarly, $\#$prec corresponds 
to the number of preconditioner applications within the \textit{$it_{G}$} GMRES iterations used as a GPLHR
preconditioner $T$. In MHD4800, we set the GMRES preconditioner to $(A - \sigma B)\inv$, in LSTAB\_NS to the ideal least squares commutator preconditioner, and to ILU($10^{-4}$) in UTM1700.       

Table~\ref{tab:jdqz} shows that, for a given preconditioner and trial subspace size, JDQZ failed to converge for MHD4800
and LSTAB\_NS. Slightly increasing \textit{$it_{G}$} allowed us to restore the JDQZ convergence for MHD4800. However, even with a
larger number of \textit{$it_{G}$}, the method misconverged, i.e., missed several wanted eigenpairs and instead returned
the ones that are not among the $k$ closest to $\sigma$. For LSTAB\_NS, we were not able to restore the convergence 
neither by increasing the number \textit{$it_{G}$} of GMRES iterations, nor by allowing a larger trial subspace. 
\begin{figure}[ht]
\begin{center}%
    \includegraphics[width=6.4cm]{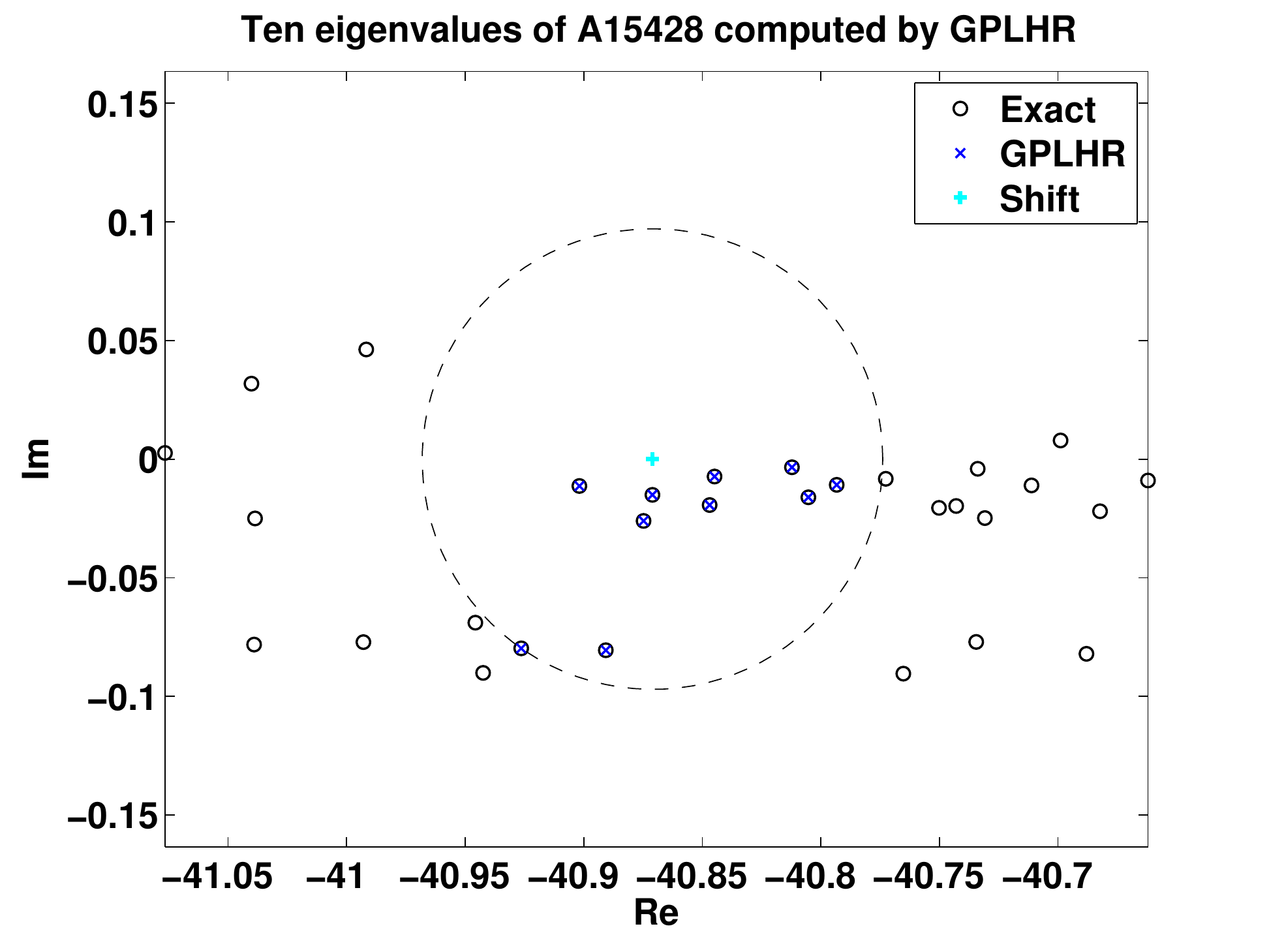}
    \includegraphics[width=6.4cm]{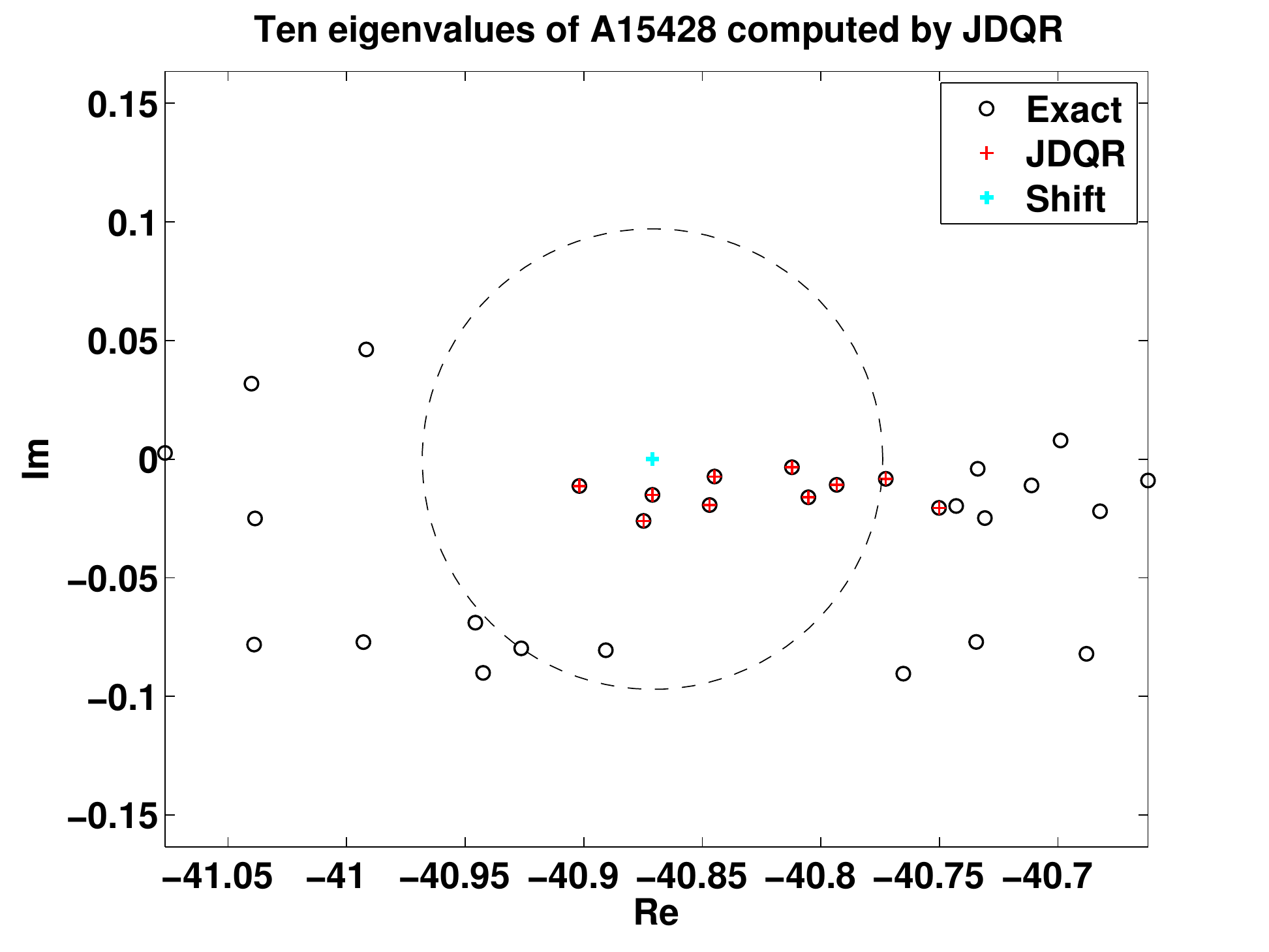}\\
    \includegraphics[width=6.4cm]{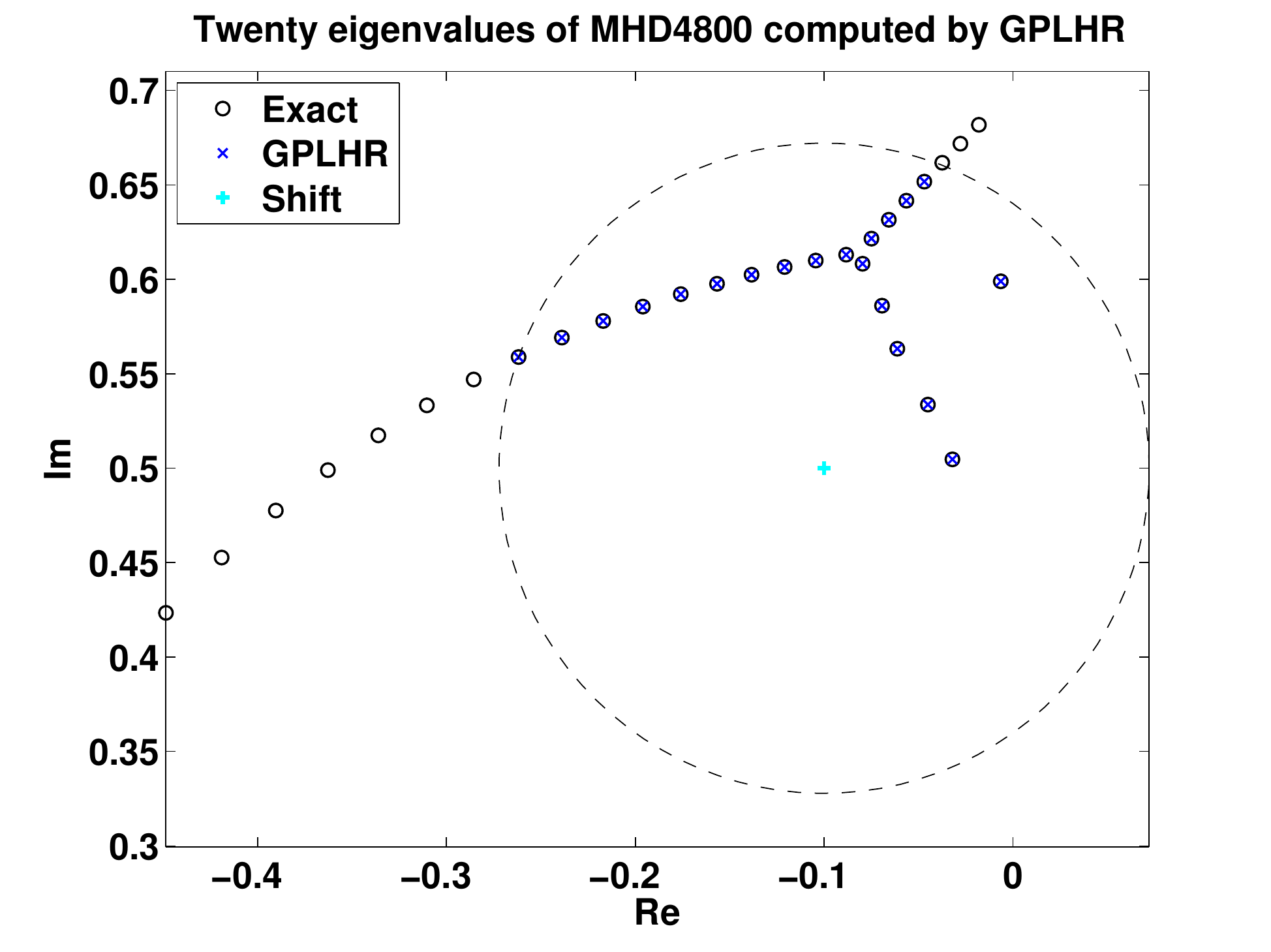}
    \includegraphics[width=6.4cm]{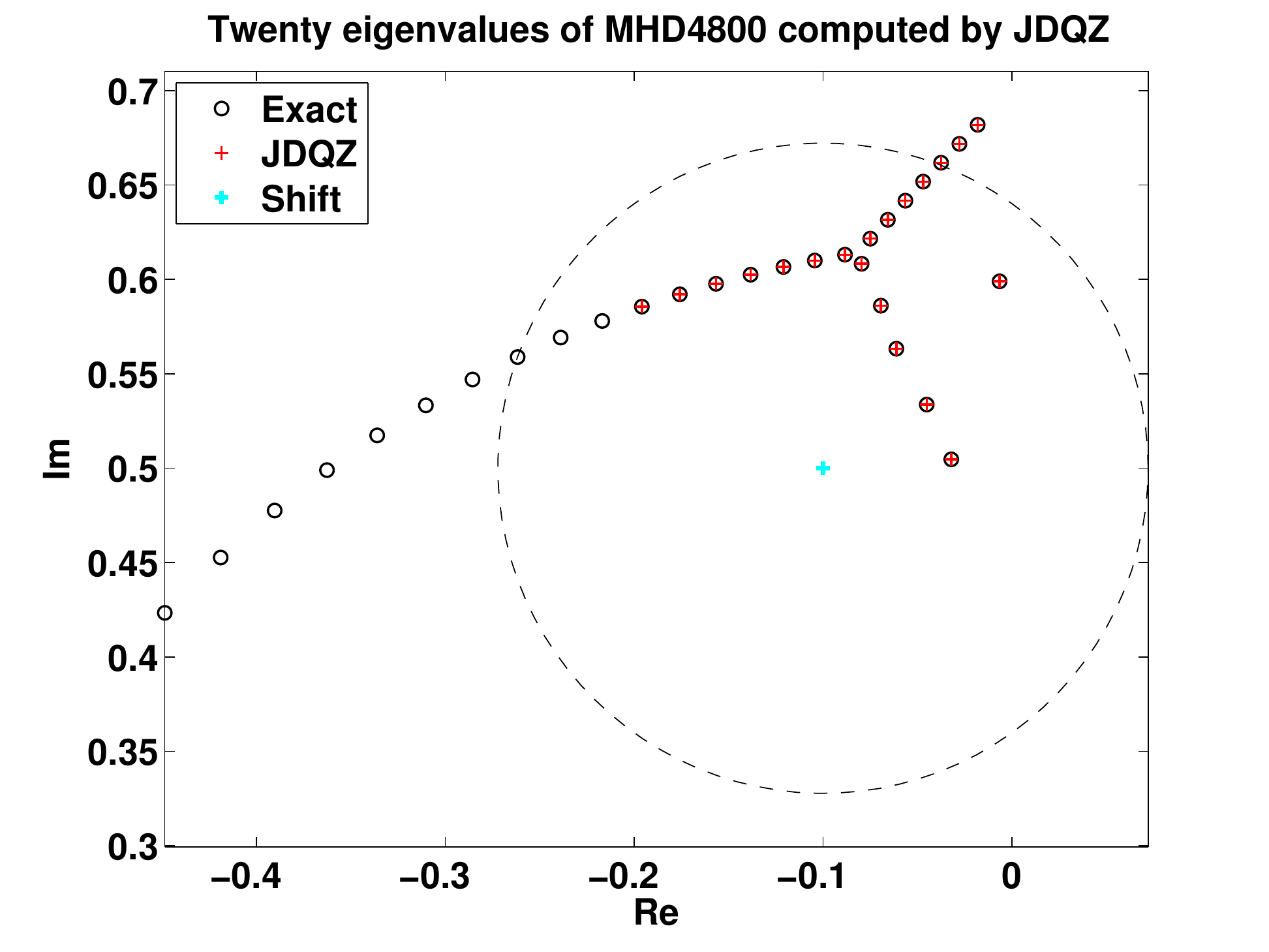}
\end{center}
\caption{
{\small 
Ten eigenvalues of A15428 closest to $\sigma = -40.8710$ (top) and twenty eigenvalues of MHD4800 (Alfv\'{e}n branch)
closest to $\sigma = -0.1+0.5i$ (bottom) computed by GPLHR (left) and JDQR/JDQZ (right) algorithms.
}
}
\label{fig:misconv}
\end{figure}


We would like to emphasize the remarkable robustness of GPLHR. Given an appropriate preconditioner $T$, in all of our tests,
the method consistently obtained the correct results, without convergence stagnation or misconvergence,
as was observed in several JDQR/JDQZ runs (in Tables~\ref{tab:jdqr} and~\ref{tab:jdqz}, ``DNC'' means that the method did not converge
and ``MCV'' denotes misconvergence).
The observed robustness is not surprising, as the block methods have traditionally been recommended for properly resolving eigenvalue clusters~\cite{Parlett:98}.   

We have also noticed that JDQZ/JDQR are more likely to misconverge, i.e., discover incorrect eigenvalues, 
when a larger number of eigenpairs (say, $k \geq 10$) is needed. Such situations are demonstrated in Figure~\ref{fig:misconv}, 
where $k = 10$ eigenpairs of A15428 (top) and $k = 20$ eigenpairs of MHD4800 (bottom) closest to $\sigma$ are computed by different
eigensolvers. 
It can be seen that the GPLHR found all the targeted eigenvalues (within the
dashed circle), whereas JDQR/JDQZ missed a few of them. This can be possibly caused by the repeated deflations
performed by JDQR/JDQZ, which can accumulate error and lead to an inaccurate solution. As a GPLHR preconditioner $T$ for A15428, we use an approximate
solve of $(A - \sigma I) w = r$ by GMRES preconditioned with ILU($10^{-2}$) and convergence tolerance (\textit{tol\_prec}) of~$10^{-3}$. 
The GMRES solve with the same ILU($10^{-2}$) preconditioner is applied to the correction equation in JDQR. 
For the MHD4800 case, the GPLHR preconditioner is $T = (A - \sigma B)^{-1}$, which is also applied to the residuals in the JDQZ scheme to define corrections,
without performing GMRES steps. Note that adding inner GMRES iterations to solve the correction equation did not fix the JDQZ misconvergence issue.



\subsection{Comparison with ARPACK}

In our last set of tests we compare GPLHR with the implicitly restarted Arnoldi method available in ARPACK through
{\sc matlab}'s \texttt{eigs} function. Here, we restrict attention to the top three examples in Table~\ref{tab:test}
which are among the largest in our collection. The results for these problems are representative for large-scale eigenproblems.

In order to compute $k$ eigenvalues 
closest to the target shift $\sigma$,
ARPACK has to be supplied with a procedure to invert the shifted matrix $A - \sigma B$, which represents a major challenge in this type
of calculation. In essence, ARPACK applies the Arnoldi algorithm to the transformed matrix pair $((A - \sigma B)^{-1} B, I)$, and therefore 
the inversion of $A - \sigma B$ should be performed to high accuracy to maintain exact Arnoldi relations.

By contrast, GPLHR does not require any exact inverses and instead relies on preconditioning, given by 
$T \approx (A - \sigma B)^{-1}$, which, as we have observed, does not necessarily have to provide a very accurate approximation
of the shift-and-invert operator. Our tests demonstrate that the savings obtained by GPLHR through avoiding the exact inverse of $A - \sigma B$ can be substantial.

One way to invert $A - \sigma B$ is by performing its LU factorization. 
Then the matrix-vector products in ARPACK can be 
defined through triangular solves with the $L$ and $U$ factors. However, 
depending on the sparsity and structure, 
the full LU factorization can be costly, and is generally significantly 
more expensive than performing it incompletely in the context of
ILU preconditioning.

\begin{table}[htb]
\centering
{\small
\begin{tabular}{|c|c|c|c|}
\hline 
Problem & $k$ & GPLHR & ARPACK\tabularnewline
\hline 
\hline 
A15428 & 10 & 335 & 3006\tabularnewline
\hline 
AF23560 & 5 & 2 & 8\tabularnewline
\hline 
CRY10000 & 5 & 1.4 & 1.9\tabularnewline
\hline 
\end{tabular}
}
\caption{
{\small Time required by GPLHR with the preconditioner $T$ given by ILU\textup{($10^{-3}$)} and ARPACK with the shift-and-invert
operator computed through the LU decomposition of $A - \sigma I$
to find $k$ eigenvalues closest to $\sigma$ and their associated eigenvectors.}}
\label{tab:arpack}
\end{table}

Table~\ref{tab:arpack} shows the difference in time required by GPLHR and ARPACK to compute a number of eigenvalues closest to~$\sigma$.
Here, the shift-and-invert in ARPACK is performed by means of the LU factorization, whereas GPLHR only uses the incomplete ILU factors as
a preconditioner. One can see that, even for problems of the size of several tens of thousands, GPLHR can achieve almost 10 time speed up. 
Note that the results reported in Table~\ref{tab:arpack} include time required to compute the full and incomplete triangular factors in ARPACK and 
GPLHR, respectively.   
All of our results were obtained on an iMac computer running Mac OS X 10.8.5, MATLAB R2013a, 
with a 3.4 GHz Intel Core i7 processor and 16GB 1600MHz DDR3 memory.

\begin{table}[htb]
\centering
{\small
\begin{tabular}{|c|c|c|c|c|}
\hline 
Problem & $k$ & \textit{$it_{G}$} & GPLHR & ARPACK\tabularnewline
\hline 
\hline 
A15428 & 10 & 7 & 153 & 196\tabularnewline
\hline 
AF23560 & 5 & 5 & 6 & 131\tabularnewline
\hline 
CRY10000 & 5 & 5 & 6 & 95\tabularnewline
\hline 
\end{tabular}
}
\caption{
\small{Time required by GPLHR with the preconditioner $T$ given by \textit{$it_{G}$} steps of preconditioned GMRES and ARPACK with shift-and-invert 
operator defined through a preconditioned GMRES solve of $(A - \sigma I)w = r$ to full accuracy to compute $k$ eigenvalues closest to $\sigma$ 
and their associated eigenvectors. In all cases, 
GMRES is preconditioned with~ILU\textup{($10^{-2}$)}.   
%
}
}
\label{tab:arpack_iter}
\end{table}

Another way to perform shift-and-invert in ARPACK is through invoking an iterative linear solver to compute the solution of a system 
$(A - \sigma B)w = r$ to full accuracy. Instead, GPLHR can rely on a solve with a significantly more relaxed
stopping criterion, which will play a role of the preconditioner $T$. The performance of ARPACK and GPLHR in such a setting is
compared in Table~\ref{tab:arpack_iter}, which reports time required by both solvers to obtain the solution. 
Here, the ARPACK's shift-and-invert is performed using the full-accuracy GMRES solve, preconditioned by ILU($10^{-2}$). 
At the same time, the GPLHR preconditioner $T$ is 
given by only a few ($\textit{$it_{G}$}$) GMRES steps with the same ILU($10^{-2}$) preconditioner. 
Once again, in these tests, we observe more than 10 time speedup when the eigensolution is performed using GPLHR. 




\section{Conclusions}\label{sec:con}
We have presented the Generalized Preconditioned Locally Harmonic Residual method (GPLHR) 
for computing eigenpairs of non-Hermitian regular matrix pairs $(A,B)$ that 
correspond to the eigenvalues closest to a given shift~$\sigma$. 
GPLHR is a block eigensolver, 
which takes advantage
of a preconditioner when it is available, and does not require an exact or highly accurate shift-and-invert procedure.
The method allows multiple matrix-vector product to be computed simultaneously,
and can take advantage of BLAS3 through blocking. As a result, it is 
suitable for high performance computers with many processing units.
 
Our numerical experiments demonstrated the robustness and 
reliability of the proposed algorithm. We compared GPLHR 
to several state-of-the-art eigensolvers (including block GD, JDQR/JDQZ, and implicitly restarted Arnoldi) 
for a variety of eigenvalue problems coming from different applications.
Our results show that GPLHR is competitive to the established approaches 
in general.  It is often more efficient, especially if there is a limited
amount of memory.

\newpage
\appendix
\section{The GPLHR-EIG algorithm}\label{sec:gplhr-eig}
In Algorithm~\ref{alg:gplhr-eig}, we present 
an eigenvector-based version of the GPLHR algorithm, called 
GPLHR-EIG.
\begin{algorithm}[!htbp]
\begin{small}
\begin{center}
  \begin{minipage}{5in}
\begin{tabular}{p{0.5in}p{4.5in}}
{\bf Input}:  &  \begin{minipage}[t]{4.0in}
                 A pair $(A, B)$ of $n$-by-$n$ matrices, shift $\sigma \in \IC$ different from any eigenvalue of $(A,B)$, 
                 preconditioner $T$, starting guess of eigenvectors $X^{(0)} \in \IC^{n \times k}$, and the subspace expansion parameter $m$. 
                  \end{minipage} \\
{\bf Output}:  &  \begin{minipage}[t]{4.0in}
                  If $B \neq I$, then approximate eigenvectors $X \in \IC^{n \times k}$ and the associated diagonal matrices 
                  $\Lambda_A, \Lambda_B \in \IC^{k \times k}$ in~\eqref{eq:partial}, such that $\lambda_j = \Lambda_A(j,j)/\Lambda_B(j,j)$
                  are the $k$ eigenvalues of $(A,B)$ closest to $\sigma$. \\  
                  If $B = I$, then approximate eigenvectors $X \in \IC^{n \times k}$ and the associated diagonal matrix 
                  $\Lambda \in \IC^{k \times k}$ of $k$ eigenvalues of $A$ closest to $\sigma$. \\  
                  \end{minipage}
\end{tabular}
\begin{algorithmic}[1]
\STATE $X \gets X^{(0)}$; $Q \gets (A - \sigma B)X$; $P \gets [ \ ]$;
\STATE Normalize columns of $X$ ($x_j \gets x_j/\|x_j\|$); $\Lambda_A \gets \mbox{diag}\left\{ X^* A X \right\}$;  $\Lambda_B \gets \mbox{diag}\left\{ X^* B X \right\}$;
\WHILE {convergence not reached}
  \STATE $V \gets \text{orth}(X)$; $Q \gets \text{orth}(Q)$; 
  \IF{$B \neq I$}
     \STATE $W \gets (I - VV^*)T(I -QQ^*)(AX\Lambda_B - B X\Lambda_A)$;
  \ELSE
     \STATE $W \gets (I - VV^*)T(I -VV^*)(AX\Lambda_B - X\Lambda_A)$;
  \ENDIF 
  \STATE $W \gets \mbox{orth}(W)$; $S_{0} \gets W$; $S \gets [ \ ]$;
  \FOR {$l = 1 \rightarrow m$}
     \IF{$B \neq I$}
        \STATE $S_{l} \gets (I - VV^*)T(I - QQ^*)(A S_{l-1} \Lambda_B - B S_{l-1} \Lambda_A)$;
     \ELSE
        \STATE $S_{l} \gets (I - VV^*)T(I - VV^*)(A S_{l-1} \Lambda_B - S_{l-1} \Lambda_A)$;
     \ENDIF
     \STATE $S_l \gets S_l -  W( W^*S_l)$; $S_l \gets S_l - S(S^*S_l)$; 
$S_l \gets \mbox{orth}(S_l)$; $S \gets [S \ S_l]$;
  \ENDFOR
  \STATE $P \gets P - V(V^*P)$; $P \gets P - W(W^*P)$; $P \gets P - S(S^*P)$; $P \gets \text{orth}(P)$;
  \STATE Set the trial subspace $Z \gets [V, \ W, \  S_{1}, \ldots,  \ S_{m}, \ P ]$;
  \STATE $\hat Q \gets \texttt{orth}( (A - \sigma B) [W, \ S_1, \ \ldots, \ S_m, P] )$; $\hat Q \gets \hat Q - Q(Q^*\hat Q)$; 
  \STATE Set the test subspace $U \gets [Q, \hat Q]$;
  \STATE Solve the projected eigenproblem $(U^* A Z, U^* B Z)$; 
   set $\bar Y$ to be the matrix of all eigenvectors ordered  according to their eigenvalues' closeness to $\sigma$, ascendingly;
  \STATE $Y \gets \bar Y(\texttt{:,1:k})$; 
  \STATE  $P \gets W Y_W  + S_{1} Y_{S_1} + \ldots + S_{m} Y_{S_m} + P Y_{P}$, where $Y \equiv [Y_V^T, Y_W^T , Y_{S_1}^T, \ldots, Y_{S_m}^T, Y_P^T ]^T$
  is a conforming partition of $Y$;
  \STATE  $X \gets V Y_V + P$; $Q \gets (A - \sigma B)X$;
  \STATE $P \gets Z \bar Y(\texttt{:,k+1:2k}))$;\footnote{This step can be disabled if the LOBPCG-style search direction of step 25 is preferred.}
  \STATE  Normalize columns of $X$ ; 
$\Lambda_A \gets \mbox{diag}\left\{ X^* A X \right\}$;  $\Lambda_B \gets \mbox{diag}\left\{ X^* B X \right\}$;
\ENDWHILE
  \STATE If $B = I$, then $\Lambda \gets \Lambda_A$.
\end{algorithmic}
\end{minipage}
\end{center}
\end{small}
  \caption{The GPLHR-EIG algorithm for partial eigendecomposition~\eqref{eq:partial}}
  \label{alg:gplhr-eig}
\end{algorithm}

The work flow of Algorithm~\ref{alg:gplhr-eig} is nearly identical to  that of the Schur vector based 
GPLHR in Algorithm~\ref{alg:gplhr-qz}. The main algorithmic difference is that GPLHR-EIG discards the 
harmonic Ritz values computed in step 23 and, instead, defines eigenvalue approximations 
$(\alpha_j, \beta_j)$ using bilinear forms
$\alpha_j = x_j^{*}A x_j$ and $\beta_j = x_j^{*}B x_j$, 
where $x_j$ are the columns of $X$ at a given iteration ($j = 1, \ldots, k$). Clearly, with this choice,
the ratios $\alpha_j/\beta_j$ are exactly the Rayleigh quotients for $(A, B)$ evaluated at the 
corresponding harmonic Ritz vectors. This approach is motivated by the fact that the Rayleigh quotients
represent optimal eigenvalue approximations, and is common in the harmonic Rayleigh--Ritz based
eigenvalue computations;~e.g.,~\cite{Morgan:91, Morgan.Zeng:98, Ve.Kn:14TR}. 
 

The complexity and memory requirements of Algorithm~\ref{alg:gplhr-eig} are comparable to those of Algorithm~\ref{alg:gplhr-qz}. 
Note that it is not necessary to keep both the approximate eigenvectors $X$ 
and the orthonormal basis $V$, since
$X$ can be rewritten with $V$.
Therefore, no additional memory for storing $V$ is needed in practice.   

Although in the numerical experiments of Section~\ref{sec:num} 
we report only the results for the Schur vector based variant of GPLHR (Algorithm~\ref{alg:gplhr-qz}),
the performance of GPLHR-EIG was found to be similar for the problems under considerations. 
Therefore, we do not report results for Algorithm~\ref{alg:gplhr-eig}.
Instead we refer the reader to~\cite{Zuev.Ve.Yang.Orms.Krylov:15}, where a version of GPLHR-EIG has been benchmarked for a specific application.

Finally, let us remark that GPLHR-EIG 
is not quite suitable for computing larger subsets 
of eigenpairs using the deflation technique. In contrast to the Schur vector based GPLHR, the computed set of eigenvectors cannot 
be directly used for deflation. Thus, Algorithm~\ref{alg:gplhr-eig} should be invoked in situations where the number of desired eigenpairs $k$ is sufficiently
small to ensure their efficient computation in a single run of the algorithm with the block size (at least) $k$.

\bibliographystyle{siam} 
\bibliography{eig,plmr}

\end{document}